\newtheorem{theorem}{Theorem}[section]
\newtheorem{lemma}[theorem]{Lemma}
\newtheorem{proposition}[theorem]{Proposition}
\newtheorem{corollary}[theorem]{Corollary}
\newtheorem{definition}[theorem]{Definition}
\newtheorem{remark}[theorem]{Remark}
\newtheorem{conjecture}[theorem]{Conjecture}
\Crefname{conjecture}{Conjecture}{Conjectures}
\theoremstyle{plain}
\theoremstyle{plain}
\newcommand{\N}{\mathbb{N}}
\newcommand{\Z}{\mathbb{Z}}
\newcommand{\R}{\mathbb{R}}
\newcommand{\C}{\mathbb{C}}
\newcommand{\bbH}{\mathbb{H}}
\newcommand{\mymod}{\operatorname{mod}}
\newcommand{\eps}{\varepsilon}
\newcommand{\SL}{\operatorname{SL}}
\newcommand{\calH}{\mathscr{H}}
\newcommand{\G}{\mathscr{G}}
\newcommand{\sign}{\operatorname{sgn}}
\renewcommand{\Re}{\operatorname{Re}}
\renewcommand{\Im}{\operatorname{Im}}
\newcommand{\calR}{\mathscr{R}}
\numberwithin{equation}{section}
\author{Michael H. Mertens}
\title{Mock Modular Forms and Class Number Relations}
\keywords{class number relation, (mock) modular form, Appell-Lerch sum}
\subjclass[2000]{11E41, 11F37, 11F30}
\address{Mathematisches Institut der Universit\"at zu K\"oln \\ 
         Weyertal 86-90 \\
         D-50931 K\"oln, Germany}
\email{mmertens@math.uni-koeln.de}
\urladdr{http://www.mi.uni-koeln.de/~mmertens}
\thanks{The author's research is supported by the DFG-Graduiertenkolleg 1269 "Global Structures in Geometry and Analysis".\\
This paper is part of the author's PhD-thesis, written under the supervision of Prof. Dr. K. Bringmann at the Universit\"at zu K\"oln.}
\begin{document}
\begin{abstract}
In this paper, we prove an almost 40 year old conjecture by H. Cohen concerning the generating function of the Hurwitz class number of quadratic forms using the theory of mock modular forms. This conjecture yields an infinite number of so far unproven class number relations. 
\end{abstract}

\maketitle
\section{Introduction}
Since the days of C.F. Gau\ss, it has been an important problem in number theory to determine the class numbers of binary quadratic forms. One aspect of this, which is also of interest regarding computational issues, are the so called \emph{class number relations}. These express certain sums of class numbers in terms of more elementary arithmetic functions which are easier to understand and computationally more feasible. The first examples of these relations are due to L. Kronecker \cite{Kro60} and A. Hurwitz \cite{Hur84}\cite{Hur85}:

Let $H(n)$ denotes the Hurwitz class number of a non-negative integer $n$ (cf. \Cref{secBasics} for the definition). Then we have the relation
\begin{equation}\label{KroHur}
\sum\limits_{s\in\Z}H(4n-s^2)+2\lambda_1(n)=2\sigma_1(n),
\end{equation}
where 
\begin{equation}\label{lambda}
\lambda_k(n):=\frac 12\sum\limits_{d\mid n}\min\left(d,\frac nd\right)^k
\end{equation} 
and $\sigma_k(n):=\sum_{d\mid n} d^k$ is the usual $k$-th power divisor sum.

This was further extended by M. Eichler in \cite{Eichler55}. For odd $n\in\N$ we have
\begin{equation}\label{Eic}
\sum\limits_{s\in\Z}H(n-s^2)+\lambda_1(n)=\frac 13\sigma_1(n).
\end{equation}
Other such examples of class number relations can be obtained, e.g. from the famous Eichler-Selberg trace formula for cusp forms on $\SL_2(\Z)$.

In 1975, H. Cohen \cite{Coh75} generalized the Hurwitz class number using Dirichlet's class number formula (see e.g. \cite{Dav}) to a number $H(r,n)$ which is closely related to the value of a certain Dirichlet $L$-series at $(1-r)$ and showed that for $r\geq 2$ the generating function
\[\calH_r(\tau):=\sum\limits_{n=0}^\infty H(r,n)q^n,\quad q=e^{2\pi i\tau},\quad \Im(\tau)>0\] 
is a modular form of weight $r+\tfrac 12$ on $\Gamma_0(4)$ (\cite[Theorem 3.1]{Coh75}). This yields many interesting relations in the shape of \eqref{KroHur} and \eqref{Eic} for $H(r,n)$.

The case $r=1$, where $H(1,n)=H(n)$, was treated around the same time by D. Zagier (\cite{Zagier75},\cite[Chapter 2]{HZ76}): He showed that the function $\calH(\tau)=\calH_1(\tau)$ is in fact \emph{not} a modular form but can be completed by a non-holomorphic term such that the completed function transforms like a modular form of weight $\tfrac 32$ on $\Gamma_0(4)$. 

\mbox{}\\
In more recent years, this phenomenon has been understood in a broader context: The discovery of the theory behind Ramanujan's mock theta functions by S. Zwegers \cite{ZwegersDiss}, J.H. Bruinier and J. Funke \cite{BF04}, K. Bringmann and K. Ono \cite{BO06} and many, many others has revealed that the function $\calH$ is an example of a weight $\tfrac 32$ mock modular form, i.e. the holomorphic part of a harmonic weak Maa{\ss} form\footnote{In the literature the spelling ``Maass form'' is more common, although these functions are named after the German mathematician Hans Maa{\ss} (1911-1992)} (see \Cref{secBasics} for a definition). Using this theory, some quite unexpected connections to combinatorics occur, as for example in \cite{BL09}, where class numbers were related to ranks of so-called overpartitions. 

\mbox{}\\
In \cite{Coh75}, Cohen considered the formal power series
\begin{equation}\label{eqCohen}
S_4^1(\tau,X):=\sum\limits_{\substack{n=0\\ n\text{ odd}}}^\infty \left[\sum_{\substack{s\in\Z\\ s^2\leq n}}\frac{H(n-s^2)}{1-2sX+nX^2}+\sum\limits_{k=0}^\infty \lambda_{2k+1}(n)X^{2k}\right]q^n.
\end{equation}
From Zagier's and his own results, as well as computer calculations, he conjectured that the following should be true.
\begin{conjecture}\label{conjCohen}(H. Cohen, 1975)\\
The coefficient of $X^\ell$ in the formal power series in \eqref{eqCohen} is a (holomorphic) modular form of weight $\ell+2$ on $\Gamma_0(4)$.
\end{conjecture}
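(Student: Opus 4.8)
The plan is to expand the Cohen kernel into Gegenbauer polynomials, to recognise the class-number sum as a development coefficient of a two-variable object built from Zagier's completion of $\calH$, and then to recover genuine holomorphicity together with the $\lambda$-terms in one stroke via holomorphic projection. First I would expand the rational kernel geometrically: setting $t=\sqrt n\,X$ and $x=s/\sqrt n$ in the generating function $\frac{1}{1-2xt+t^2}=\sum_{\ell\ge0}U_\ell(x)t^\ell$ for the Chebyshev polynomials of the second kind gives
\[
\frac{1}{1-2sX+nX^2}=\sum_{\ell\ge0}p_\ell(s,n)X^\ell,\qquad p_\ell(s,n)=n^{\ell/2}U_\ell\!\left(\tfrac{s}{\sqrt n}\right),
\]
so each $p_\ell$ is a polynomial in $s$ and $n$, weighted-homogeneous of degree $\ell$ when $s$ and $n$ are given weights $1$ and $2$, and even or odd in $s$ according to the parity of $\ell$. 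Since $H(n-s^2)$ is even in $s$, the sum $\sum_{s^2\le n}p_\ell(s,n)H(n-s^2)$ vanishes for odd $\ell$, while the $\lambda$-series only feeds even powers of $X$; hence the odd coefficients of \eqref{eqCohen} are identically $0$, and it suffices to treat $\ell=2\nu$, where the asserted weight is $\ell+2=2\nu+2\ge2$.

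Next I would build the non-holomorphic completion. On its own the coefficient $\sum_s p_{2\nu}(s,n)H(n-s^2)$ is merely a development coefficient of a product of the weight-$\tfrac32$ mock modular form $\calH$ with the theta series $\theta(\tau)=\sum_{s\in\Z}q^{s^2}$ of weight $\tfrac12$, and is a priori only quasimodular; this is precisely why Cohen's packaging $\tfrac{1}{1-2sX+nX^2}$, rather than a naive product or a single Rankin--Cohen bracket, is needed. Replacing $\calH$ by Zagier's harmonic Maa{\ss} form $\widehat{\calH}$, whose shadow is a multiple of $\theta$, I would show that for each $\nu$ the $X^{2\nu}$-coefficient of the completed two-variable kernel is a genuine non-holomorphic modular form $\widehat F_\nu$ of weight $2\nu+2$ on $\Gamma_0(4)$, the Gegenbauer packaging being exactly what cancels the quasimodular defects. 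The holomorphic part of $\widehat F_\nu$ then has $n$-th Fourier coefficient $\sum_s p_{2\nu}(s,n)H(n-s^2)$.

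Finally I would apply holomorphic projection. Since $2\nu+2\ge2$ and $\widehat F_\nu$ is of moderate growth, $\pi_{\mathrm{hol}}(\widehat F_\nu)$ is a holomorphic modular form of weight $2\nu+2$ on $\Gamma_0(4)$, and its $n$-th coefficient is that of the holomorphic part of $\widehat F_\nu$ corrected by a contribution coming from the non-holomorphic part of $\widehat{\calH}$. This correction arises from the ``diagonal'' interaction between the incomplete-Gamma completion terms (supported on indices $-s^2$) and the $+s^2$ from $\theta$, and I expect it to evaluate, through an explicit integral over $y=\Im(\tau)$ of incomplete Gamma functions weighted by $p_{2\nu}$, to precisely $\lambda_{2\nu+1}(n)$. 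After restricting to odd $n$ this identifies $\pi_{\mathrm{hol}}(\widehat F_\nu)$ with the $X^{2\nu}$-coefficient of \eqref{eqCohen}, which is therefore a holomorphic modular form of weight $2\nu+2$, as claimed.

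The main obstacle is this last evaluation: carrying out the holomorphic projection of the non-holomorphic term and showing that the resulting period integrals collapse to the arithmetic function $\lambda_{2\nu+1}(n)=\tfrac12\sum_{d\mid n}\min(d,n/d)^{2\nu+1}$ demands delicate manipulation of the incomplete Gamma functions against the Gegenbauer weights $p_{2\nu}$. Closely related technical points are the conceptual core of the second step, namely proving that the Gegenbauer packaging genuinely produces a modular (after completion) object of weight $2\nu+2$ so that the quasimodular defects cancel; the borderline weight-$2$ case $\nu=0$, which is Zagier's original computation and needs a separate regularisation of $\pi_{\mathrm{hol}}$; and the bookkeeping of the half-integral-weight multiplier system and of the sieve onto odd $n$ at the claimed level $\Gamma_0(4)$.
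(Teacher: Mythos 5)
Your strategy is sound, but it is genuinely different from the route the paper takes, so let me compare the two. The paper also begins exactly as you do: Cohen's Gegenbauer expansion shows that the odd coefficients vanish and that the $X^{2k}$-coefficient is $\tfrac{c_k}{2}\bigl([\calH,\vartheta]_k(\tau)-[\calH,\vartheta]_k(\tau+\tfrac12)\bigr)+\Lambda_{2k+1,odd}(\tau)$, a combination of Rankin--Cohen brackets that becomes weight $2k+2$ modular once $\calH$ is replaced by Zagier's completion $\G=\calH+\calR$. Where you diverge is in how the Lambert-type series $\Lambda_{2k+1,odd}$ enters. The paper does \emph{not} use holomorphic projection at all: it identifies $\Lambda_{2k+1,odd}$ as a derivative of an Appell--Lerch sum, $\tfrac12(D_v^{2k+1}A_1^{odd})(0,\tau+\tfrac12;2\tau)$, completes it with Zwegers' $R$-function, and then proves by an induction on $k$ (driven by the heat-operator identity $D_u^2R=-2D_\tau R$ and explicit evaluations of $R$ and $D_uR$ at the torsion point) that the non-holomorphic corrections coming from $\calR$ and from $R$ cancel identically. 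Holomorphicity is thus obtained as an exact cancellation of two completions, with no projection operator and no regularisation issue at weight $2$; the base case $k=0$ even reproves Eichler's relation. Your route instead completes only the class-number side and recovers $\lambda_{2\nu+1}(n)$ as the holomorphic-projection contribution of the non-holomorphic part of $\G$ against $\vartheta$; the divisor structure does emerge from $s^2-f^2=n$ exactly as you predict, and this is a workable and in fact more flexible method (it is the approach taken in subsequent work on such identities, including by this author). Its costs are the ones you already flag: the regularised projection needed at weight $2$ ($\nu=0$), and the explicit evaluation of the incomplete-Gamma integrals against the Gegenbauer weights, which is the entire technical content and is only asserted, not carried out, in your proposal. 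The paper's cancellation argument avoids both of these at the price of introducing the Appell--Lerch machinery and a rather tedious combinatorial induction; it also hands over the cuspidality of the higher coefficients essentially for free, which on your route requires a separate argument about the constant terms at all cusps.
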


The goal of this paper is to prove the following result.
\begin{theorem}\label{main}
\Cref{conjCohen} is true. Moreover, for $\ell>0$ the coefficient of $X^\ell$ in \eqref{eqCohen} is a cusp form. 
\end{theorem}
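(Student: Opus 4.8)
The plan is to realize each coefficient of $X^\ell$ as the holomorphic projection of a completed Rankin--Cohen bracket of Zagier's harmonic Maa{\ss} form with the theta function, and then to read off the arithmetic correction $\lambda_{2k+1}$ directly from that projection. First I would dispose of the odd powers. Expanding the rational function geometrically gives
\begin{equation*}
\frac{1}{1-2sX+nX^2}=\sum_{\ell\geq 0}p_\ell(s,n)X^\ell,
\end{equation*}
where $p_\ell(s,n)=n^{\ell/2}U_\ell(s/\sqrt n)$ is a Gegenbauer/Chebyshev-type polynomial satisfying $p_\ell(-s,n)=(-1)^\ell p_\ell(s,n)$. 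For odd $\ell$ the inner sum $\sum_{s^2\leq n}H(n-s^2)p_\ell(s,n)$ is odd in $s$ over a symmetric range and hence vanishes identically; since the $\lambda$-term contributes only to even powers, the coefficient of $X^\ell$ is $0$ (trivially a cusp form) for odd $\ell$. It remains to treat $\ell=2k$.

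Next I would identify the non-$\lambda$ part of the $X^{2k}$-coefficient with a Rankin--Cohen bracket. Writing $D=\tfrac{1}{2\pi i}\tfrac{\d}{\d\tau}$, $\theta(\tau)=\sum_{s\in\Z}q^{s^2}$ (weight $\tfrac12$), and $\calH(\tau)=\sum_m H(m)q^m$ (weight $\tfrac32$), the bracket
\begin{equation*}
[\calH,\theta]_k=\sum_{r=0}^{k}(-1)^r\binom{k+\tfrac12}{k-r}\binom{k-\tfrac12}{r}(D^r\calH)(D^{k-r}\theta)
\end{equation*}
has $q^n$-coefficient $\sum_{s^2\leq n}H(n-s^2)\,g_k(s,n)$, where $g_k$ is a degree-$k$ polynomial in $(s^2,n)$. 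A direct polynomial check (for $k=1$ one finds $g_1=\tfrac12(4s^2-n)$) shows $g_k=c_k\,p_{2k}$ for an explicit nonzero constant $c_k$. Restricting to odd $n$ via $f\mapsto\tfrac12\bigl(f-f|_{\tau\mapsto\tau+1/2}\bigr)$, the non-$\lambda$ part of the $X^{2k}$-coefficient equals $c_k^{-1}$ times the odd part of $[\calH,\theta]_k$.

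Now I would pass to the completion. Let $\widehat\calH=\calH+\calH^-$ be Zagier's weight $\tfrac32$ harmonic Maa{\ss} form on $\Gamma_0(4)$, whose non-holomorphic part $\calH^-$ is built from $\Gamma(-\tfrac12,4\pi f^2y)\,q^{-f^2}$ together with a $y^{-1/2}$ term (its shadow being proportional to $\theta$). Then $[\widehat\calH,\theta]_k$ transforms like a real-analytic modular form of weight $2k+2$ on $\Gamma_0(4)$, and since $2k+2\geq 2$ I would apply the holomorphic projection operator $\pi_{\mathrm{hol}}$ (with the usual Hecke regularization in the boundary weight-$2$ case $k=0$) to obtain a genuine holomorphic modular form of weight $2k+2$. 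Because projection preserves modularity and acts trivially on the holomorphic part, $\pi_{\mathrm{hol}}[\widehat\calH,\theta]_k$ has $q^n$-coefficients equal to the naive coefficients $\sum_{s^2\leq n}H(n-s^2)g_k(s,n)$ plus a correction arising solely from $[\calH^-,\theta]_k$.

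The heart of the proof—and the main obstacle—is to compute this correction and show it is the appropriate multiple of $\lambda_{2k+1}$. The projection of the $\calH^-$-contribution produces sums over pairs $(s,f)$ constrained by $s^2-f^2=n$; since $s^2-f^2=(s-f)(s+f)$ runs over the factorizations $n=d\cdot\tfrac nd$, these collapse into divisor sums in which $\min\!\bigl(d,\tfrac nd\bigr)^{2k+1}$ appears, which is exactly the shape of $\lambda_{2k+1}(n)$ in \eqref{lambda}. Carrying out the incomplete-Gamma integral against the holomorphic-projection kernel and matching constants should identify the correction with the correct multiple of $\lambda_{2k+1}(n)$ on odd $n$, so that the full $X^{2k}$-coefficient equals $c_k^{-1}$ times $\pi_{\mathrm{hol}}$ of the odd part of $[\widehat\calH,\theta]_k$, a holomorphic modular form of weight $2k+2$; the delicate points here are the conditional convergence and the weight-$2$ regularization when $k=0$. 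Finally, for $k>0$ cuspidality would follow by checking that the constant term vanishes at each of the three cusps of $\Gamma_0(4)$: the derivatives forced by the Rankin--Cohen bracket for $k\geq1$ annihilate the constant-term products, so that an Eisenstein contribution can survive only in the weight-$2$ case $k=0$.
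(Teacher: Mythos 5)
Your proposal is mathematically sound in outline but follows a genuinely different route from the paper. The paper never invokes holomorphic projection: instead it realizes the arithmetic term $\Lambda_{2k+1,odd}$ itself as a derivative of an Appell--Lerch sum (\Cref{lemAppell}), completes \emph{both} pieces --- the Rankin--Cohen bracket via Zagier's $\G=\calH+\calR$ and the Appell--Lerch sum via Zwegers' $\widehat{A}_1$ --- and then proves by induction on $m$, using the heat-operator identity $D_u^2R=-2D_\tau R$ of \Cref{Heat}, that the two non-holomorphic completion terms cancel exactly (\Cref{corFin} and the computation in \Cref{secProof}). Your approach instead completes only the bracket and extracts the $\lambda$-term as the holomorphic-projection correction coming from $[\calR,\vartheta]_k$. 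That strategy does work --- the mechanism you describe, with pairs $s^2-f^2=n$ collapsing to factorizations $n=d\cdot\frac{n}{d}$ and producing $\min\left(d,\frac{n}{d}\right)^{2k+1}$, is correct, and it has the advantage of explaining \emph{why} $\lambda_{2k+1}$ appears rather than verifying a cancellation; it also generalizes more readily. The paper's route avoids the analytic issues of projection (most notably the weight-$2$ regularization at $k=0$, which it handles instead by a two-dimensionality argument in the space $M_2(\Gamma_0(4))$) at the cost of a fairly opaque combinatorial induction.

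That said, be aware that your write-up leaves the entire mathematical content of the theorem in the sentence ``carrying out the incomplete-Gamma integral against the holomorphic-projection kernel and matching constants should identify the correction with the correct multiple of $\lambda_{2k+1}(n)$.'' The required identity --- that the binomially weighted sum over $r+s=k$ of the integrals $\int_0^\infty \Gamma\left(-\tfrac12-r;4\pi f^2y\right)$-type moments against $e^{-4\pi ny}y^{2k}$ evaluates, after summing over the factorization data, to exactly $\min\left(d,\frac{n}{d}\right)^{2k+1}$ with the constant $c_k$ matching \Cref{conjMod} --- is a nontrivial computation of the same order of difficulty as the paper's induction, and without it the proof is not complete. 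Two smaller points: you should also verify modularity on $\Gamma_0(4)$ after the shift $\tau\mapsto\tau+\tfrac12$ (the paper's \Cref{remLevel}), and your cuspidality argument for $k\geq 1$ should be phrased at all three cusps of $\Gamma_0(4)$, e.g. by appealing to \cite[Corollary 7.2]{Coh75} as the paper does, rather than only to the vanishing of constant terms at $i\infty$.
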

This obviously implies new relations for Hurwitz class numbers which to the author's knowledge have not been proven so far. We give some of them explicitly in \Cref{corClass}.

The main idea of the proof of \Cref{main} is to relate both summands in the coefficient of the above power series to objects which in accordance to the nomenclature in \cite{Zag12} should be called \emph{quasi mixed mock modular forms}, complete them, such that they transform like modular forms and show that the completion terms cancel each other out. The same idea is also used in a very recent preprint \cite{BK13} by K. Bringmann and B. Kane in which they also prove several identities for sums of Hurwitz class numbers conjectured by B. Brown et al. in \cite{Betal}.

The outline of this paper is as follows: the preliminaries and notations are explained in \Cref{secBasics}. \Cref{secId} contains some useful identities and other lemmas which will be used in \Cref{secProof} to prove Cohen's conjecture.

Since many of the proofs involve rather long calculations, we omit some of them here. More detailed proofs will be available in the author's PhD thesis \cite{Mert14}.
\section{Preliminaries}\label{secBasics}
First, we fix some notation. For this entire paper, let $\tau$ be a variable from the complex upper half plane $\bbH$ and denote $x:=\Re(\tau)$ and $y:=\Im(\tau)$. As usual, we set $q:=e^{2\pi i\tau}$. The letters $u,v$ denote arbitrary complex variables. The differential operators with respect to all these variables shall be renormalized by a factor of $\tfrac{1}{2\pi i}$, thus we abbreviate
\[D_t:=\frac {1}{2\pi i}\frac{d}{d t}.\]

An element of $\SL_2(\Z)$ is always denoted by $\gamma=\left(\begin{smallmatrix} a & b \\ c & d \end{smallmatrix}\right)$. For some natural number $N$, we set as  usual 
\[\Gamma_0(N)=\left\lbrace \gamma=\begin{pmatrix} a & b \\ c & d\end{pmatrix}\in\SL_2(\Z) \mid c\equiv 0\quad(\mymod N)\right\rbrace.\]

There are two different theta series occuring in this paper. One is the theta series of the lattice $\Z$,
\begin{equation}\label{theta}
\vartheta(\tau):=\sum\limits_{n\in\Z} q^{n^2},
\end{equation}
while the other is the classical Jacobi theta series
\begin{equation}\label{Theta}
\Theta(v;\tau):=\sum\limits_{\nu\in\frac 12 +\Z} q^{\nu^2/2}e^{2\pi i\nu(v+1/2)}.
\end{equation}
Note that, e.g. in \cite{ZwegersDiss}, the letter $\vartheta$ stands for the Jacobi theta series.
\subsection{Mock Modular Forms}
In this subsection we give some basic definitions about harmonic Maa{\ss} forms and mock modular forms (for details, cf. \cite{BF04,Ono} et al.). Therefore we fix some $k\in\tfrac 12\Z$ and define for a smooth function $f:\bbH\rightarrow \C$ the following operators:
\begin{enumerate}
\item The weight $k$ \emph{slash operator} by
\[(f|_k\gamma)(\tau)=\begin{cases} (c\tau+d)^{-k}f\left(\frac{a\tau+b}{c\tau+d}\right) &,\,\text{if }k\in\Z \\
                                      \left(\frac{c}{d}\right)\eps_d(c\tau+d)^{-k}f\left(\frac{a\tau+b}{c\tau+d}\right) &,\,\text{if }k\in\tfrac 12+\Z
                                      \end{cases},\]
where $\left(\frac mn\right)$ denotes the extended Legendre symbol in the sense of \cite{Shi}, $\tau^{1/2}$ denotes the principal branch of the square root (i.e. $-\tfrac\pi 2<\arg(\tau^{1/2})\leq\tfrac \pi 2$), and
\[\eps_d:=\begin{cases} 1 & \text{, if }d\equiv 1\quad (\mymod 4)\\ i & \text{, if }d\equiv 1\quad (\mymod 4).\end{cases}\]
We shall assume $\gamma\in\Gamma_0(4)$ if $k\not\in\Z$.
\item The weight $k$ \emph{hyperbolic Laplacian} by ($\tau=x+iy$)
\[\Delta_k:=-y^2\left(\frac{\partial^2}{\partial x^2}+\frac{\partial^2}{\partial y^2}\right)+iky\left(\frac{\partial}{\partial x}+i\frac{\partial}{\partial y}\right).\]
\end{enumerate}
\begin{definition}
Let $f:\bbH\rightarrow\C$ be a smooth function and $k\in\tfrac 12\Z$. We call $f$ a \emph{harmonic weak Maa{\ss} form} of weight $k$ on some subgroup $\Gamma\leq\SL_2(\Z)$ (resp. $\Gamma_0(4)$ if $k\not\in\Z$) of finite index if the following conditions are met:
\begin{enumerate}
\item $(f|_k\gamma)(\tau)=f(\tau)$ for all $\gamma\in\Gamma$ and $\tau\in\bbH$
\item $(\Delta_kf)(\tau)=0$ for all $\tau\in\bbH$
\item $f$ grows at most exponentially in the cusps of $\Gamma$. 
\end{enumerate}
\end{definition}
\begin{proposition}(\cite[Lemma 7.2, equation (7.8)]{Ono})\\
Let $f$ be a harmonic Maa{\ss} form of weight $k$ with $k>0$ and $k\neq 1$. Then there is canonical splitting
\[f(\tau)=f^+(\tau) + f^-(\tau),\]
where for some $N\in\Z$ we have
\[f^+(\tau)=\sum\limits_{n=N}^\infty c_f^+(n)q^n\]
\[f^-(\tau)=c_f^-(0)\frac{(4\pi y)^{-k+1}}{k-1}+ \sum\limits_{n=1}^\infty c_f^-(-n)\Gamma(k-1;4\pi ny)q^{-n}.\]
Here 
\[\Gamma(\alpha;x)=\int\limits_x^\infty e^{-t}t^{\alpha-1}d t\]
is the incomplete Gamma function.
\end{proposition}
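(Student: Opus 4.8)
The plan is to read the splitting directly off the two analytic conditions defining a harmonic weak Maa{\ss} form. First I would apply condition (1) to the translation $\left(\begin{smallmatrix}1&1\\0&1\end{smallmatrix}\right)$, which lies in $\Gamma$ (respectively $\Gamma_0(4)$) and acts with trivial automorphy factor, to obtain $f(\tau+1)=f(\tau)$. Hence $f$ is $1$-periodic in $x=\Re(\tau)$ and admits a Fourier expansion $f(\tau)=\sum_{m\in\Z}c_m(y)e^{2\pi i m x}$ whose coefficient functions $c_m$ depend only on $y=\Im(\tau)$.

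Next I would substitute this expansion into the harmonicity equation $\Delta_k f=0$. Because the coefficients of $\Delta_k$ involve only $y$ and each $\partial_x$ acts on $e^{2\pi i m x}$ as multiplication by $2\pi i m$, the equation decouples into one second-order ODE in $y$ for each frequency $m$:
\[y^2 c_m''(y)+k\,y\,c_m'(y)+\left(2\pi k m y-4\pi^2 m^2 y^2\right)c_m(y)=0.\]
For $m=0$ this reduces to $y c_0''+k c_0'=0$, whose solution space is spanned, using $k\neq1$, by the constant function and by $y^{1-k}$; the constant feeds the holomorphic part while $y^{1-k}$ yields, after fixing the normalizing constant, the term $c_f^-(0)\tfrac{(4\pi y)^{1-k}}{k-1}$. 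For $m\neq0$ one verifies directly that $e^{-2\pi m y}$ solves the ODE, producing the holomorphic exponential $q^m$ after multiplication by $e^{2\pi i m x}$. To obtain the second, linearly independent solution I would use reduction of order: writing $c_m=e^{-2\pi m y}h$ collapses the equation to $y h''+(k-4\pi m y)h'=0$, so $h'(y)\propto y^{-k}e^{4\pi m y}$, and a single integration expresses the second solution through the incomplete Gamma function $\Gamma(\,\cdot\,;4\pi|m|y)$ appearing in the statement.

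Finally I would assemble $c_m(y)=c_f^+(m)e^{-2\pi m y}+c_f^-(m)\cdot(\text{second solution})$ and invoke the growth condition (3). The exponential pieces collect into a holomorphic $q$-series $f^+$, and at most exponential growth at the cusp $\infty$ permits only finitely many negative frequencies, giving $f^+=\sum_{m\geq N}c_f^+(m)q^m$. For the incomplete-Gamma pieces, the asymptotic $\Gamma(\alpha;x)\sim x^{\alpha-1}e^{-x}$ as $x\to\infty$ determines which frequencies are admissible and pins down their coefficients, reproducing $f^-$ in the stated form with the terms $c_f^-(-n)\,\Gamma(\,\cdot\,;4\pi n y)\,q^{-n}$ for $n\geq1$.

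The main obstacle is this final bookkeeping: combining the explicit second solution with the growth condition to prove that the non-holomorphic part carries only the frequencies $m\leq0$ while the holomorphic part has finite principal part, together with verifying the constant that links the $m=0$ solution $y^{1-k}$ to the $n\to0$ limit of the incomplete-Gamma terms. The hypotheses enter precisely here: $k\neq1$ is needed so that the two $m=0$ solutions do not degenerate into $1$ and $\log y$, and $k>0$ guarantees that $f^+$ and $f^-$ are genuinely distinguished by their growth, which is what makes the splitting canonical.
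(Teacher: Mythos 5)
Your route is the standard proof of this splitting, and it is essentially the ``paper's proof'' in the only available sense: the paper itself gives no argument but cites \cite[Lemma 7.2]{Ono}, and the proof there is exactly your separation of variables. Your computations check out: applying $\Delta_k$ to $c_m(y)e^{2\pi imx}$ does give $y^2c_m''+kyc_m'+(2\pi kmy-4\pi^2m^2y^2)c_m=0$; for $m=0$ the solutions are $1$ and $y^{1-k}$ (here $k\neq 1$ enters, as you say); and the reduction of order $c_m=e^{-2\pi my}h$ correctly yields $yh''+(k-4\pi my)h'=0$, so $h'\propto y^{-k}e^{4\pi my}$. One remark if you carry the last integration out: at frequency $m=-n<0$ you get $h\propto\Gamma(1-k;4\pi ny)$ up to a constant, i.e.\ first parameter $1-k$, not $k-1$. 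The $\Gamma(k-1;\cdot)$ in the statement is copied from Ono's normalization, where the form has weight $2-k$; for a weight-$k$ form (note the statement's $m=0$ term $y^{1-k}$ \emph{is} adapted) the parameter should read $1-k$. Your derivation, completed, would actually expose this typo.

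The one genuine soft spot is precisely the step you flagged as the ``main obstacle''. With the paper's condition (3) as literally stated -- $f$ grows \emph{at most exponentially} at the cusps -- the bound $|c_m(y)|\leq\sup_x|f(x+iy)|\leq Ce^{Cy}$ kills the growing solution only for $m>C/(2\pi)$ and the growing exponential $q^m$ only for $m<-C/(2\pi)$. At the finitely many frequencies $0<m\leq C/(2\pi)$ the second solution grows merely like $y^{-k}e^{2\pi my}$, which is still at most exponential, so condition (3) alone cannot exclude it; and such forms genuinely exist (preimages under the $\xi_k$-operator of weakly holomorphic forms with nonconstant principal part have exactly these extra positive-frequency terms in $f^-$). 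So the dichotomy you invoke -- growing terms go to $f^+$, decaying incomplete-Gamma terms to $f^-$, with $f^-$ supported on frequencies $\leq 0$ -- is not a consequence of the hypotheses as printed. It does follow from the stronger cusp condition actually used in \cite{Ono}, namely that there is a polynomial $P_f\in\C[q^{-1}]$ with $f-P_f=O(e^{-\eps y})$ as $y\to\infty$ (and correspondingly at the other cusps); under that hypothesis your bookkeeping closes without difficulty. This mismatch is inherited from the paper's definition rather than introduced by you, but a complete write-up must either assume the stronger condition or weaken the conclusion to allow finitely many growing terms in $f^-$. (A very minor further point: for a general finite-index $\Gamma$ one only has $T^h\in\Gamma$ and hence $q^{1/h}$-expansions; the statement tacitly assumes $T\in\Gamma$, as do you.)
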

\begin{definition}
\begin{enumerate}[(i)]
\item The functions $f^+$ and $f^-$ in the above proposition are referred to as the \emph{holomorphic} and \emph{non-holomorphic part} of the harmonic weak Maa{\ss} form $f$.
\item A \emph{mock modular form} is the holomorphic part of a harmonic weak Maa{\ss} form.
\end{enumerate}
\end{definition}
There are several generalizations of mock modular forms, e.g. \emph{mixed mock modular forms}, which are essentially products of mock modular forms and usual holomorphic modular forms. For details, we refer the reader to \cite[Section 7.3]{Zag12}. 
\subsection{Class Numbers}
Let $d$ be a non-negative integer with $d\equiv 0,3\quad (\mymod 4)$. Then the \emph{class number} for the discriminant $-d$ is the number of $\SL_2(\Z)$ equivalence classes of primitive binary integral quadratic forms of discriminant $-d$,
\begin{equation}
h(-d)=\left\vert\left\{Q=\begin{pmatrix} 2a & b \\ b & 2c \end{pmatrix}\in\Z^{2\times 2}\,\mid\,\det Q=d\text{ and }\gcd(a,b,c)=1 \right\}/\SL_2(\Z)\right\vert,
\end{equation}
where of course $\SL_2(\Z)$ acts via $(Q,\gamma)\mapsto \gamma^{tr}Q\gamma$.

The Hurwitz class number is now a weighted sum of these class numbers: Define $w_3=3$, $w_4=2$ and $w_d=1$ for $d\neq 3,4$. Then the Hurwitz class number is given by
\[
H(n)=\begin{cases} -\frac{1}{12} &\text{if }n=0,\\
                     0             &\text{if }n \equiv 1,2\quad (\mymod 4),\\
                     \sum\limits_{f^2\mid n} \frac{h(-n/f^2)}{w_{n/f^2}} &\text{ otherwise}. \end{cases}
                     \]
The generating function of the Hurwitz class number shall be denoted by
\[\calH(\tau)=\sum\limits_{n=0}^\infty H(n)q^n.\]
We have the following result concerning a modular completion of the function $\calH$ which was already mentioned in the introduction (cf. \cite[Chapter 2, Theorem 2]{HZ76}). 
\begin{theorem}\label{ModcalH}
Let 
\[\calR(\tau)=\frac{1+i}{16\pi}\int\limits_{-\overline{\tau}}^{i\infty}\frac{\vartheta(z)}{(z+\tau)^{3/2}}d z.\]
Then the function
\[\G(\tau)=\calH(\tau)+\calR(\tau)\]
transforms under $\Gamma_0(4)$ like a modular form of weight $\tfrac 32$.
\end{theorem}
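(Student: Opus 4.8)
The structural observation underlying this statement is that $\calR$ is precisely the non-holomorphic Eichler integral of the weight $\tfrac 12$ theta series $\vartheta$: the kernel $(z+\tau)^{-3/2}$ carries exactly the weight of $\G$, and $\vartheta$ is the (prospective) shadow. Thus $\G=\calH+\calR$ has the exact shape of a harmonic weak Maa{\ss} form of weight $\tfrac 32$ whose shadow is a multiple of $\vartheta$. The plan is twofold: first make this precise by computing the Fourier expansion of $\calR$, thereby verifying harmonicity and growth and identifying the shadow; and second, establish the modular transformation by realizing $\G$ as a special value of a real-analytic Eisenstein series of weight $\tfrac 32$ on $\Gamma_0(4)$, whose Fourier coefficients reproduce the Hurwitz class numbers through Dirichlet's class number formula.

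First I would unfold $\calR$. The integration runs along the vertical ray $\Re(z)=-x$, so substituting $z=-x+it$ with $t\geq y$ gives $z+\tau=i(t+y)$ and $\vartheta(z)=\sum_{n\in\Z}e^{-2\pi i n^2 x}e^{-2\pi n^2 t}$. Interchanging the sum with the integral and evaluating each $t$-integral against $(t+y)^{-3/2}$ expresses the $n$-th contribution as an incomplete Gamma integral, yielding an expansion of $\calR$ of exactly the shape of the non-holomorphic part $f^-$ recalled above, namely a constant term from $n=0$ together with terms $\Gamma(-\tfrac 12;\,4\pi n^2 y)\,q^{-n^2}$ for $n\neq 0$. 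This shows that $\G$ grows at most polynomially at the cusps and, by applying $\Delta_{3/2}$ termwise (each such term being annihilated), that $\Delta_{3/2}\G=0$. Applying $\xi_{3/2}$ then gives $\xi_{3/2}\G=c\,\vartheta$ for an explicit constant $c$, confirming the shadow.

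For the transformation law I would argue by construction rather than by manipulating $\calH$ directly, since $\calH$ alone is not modular. Following Zagier, introduce the weight $\tfrac 32$ real-analytic Eisenstein series on $\Gamma_0(4)$,
\[
E(\tau,s)=\sum_{\gamma\in\Gamma_\infty\backslash\Gamma_0(4)}\left(\tfrac{c}{d}\right)\eps_d\,(c\tau+d)^{-3/2}\,|c\tau+d|^{-2s},
\]
which converges for $\Re(s)$ large and admits meromorphic continuation by Hecke's trick; by construction $E(\cdot,s)|_{3/2}\gamma=E(\cdot,s)$ for all $\gamma\in\Gamma_0(4)$. The task is then to compute its Fourier expansion at the relevant special point $s=0$ after continuation: a quadratic Gauss sum computation produces coefficients that, via Dirichlet's class number formula, are exactly the $H(n)$ of the holomorphic part, while the remaining terms assemble into the incomplete-Gamma expansion of $\calR$ found above. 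Matching the two expansions identifies $\G$ with $E(\cdot,0)$ up to normalization, and modularity is inherited. As an alternative one can check the transformation directly on the two parabolic generators $T=\left(\begin{smallmatrix}1&1\\0&1\end{smallmatrix}\right)$ and $\left(\begin{smallmatrix}1&0\\4&1\end{smallmatrix}\right)$ of $\Gamma_0(4)$: invariance under $T$ is immediate from the $1$-periodicity of both $\calH$ and $\vartheta$, while for the second generator one inserts the weight $\tfrac 12$ transformation of $\vartheta$ into the integral and reduces the $\tfrac 32$-slash action on $\calR$ to a contour deformation whose boundary term is a period integral that cancels the modular defect of $\calH$.

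The hard part will be this modularity step. In the Eisenstein-series route the difficulty is the explicit coefficient computation: controlling convergence and analytic continuation of the metaplectic Eisenstein series, evaluating the attendant Gauss sums, and recognizing the outcome as $H(n)$ through the class number formula. In the direct route the analogous difficulty is to justify interchanging the slash operator with the contour integral and to prove that the resulting period term exactly offsets the failure of $\calH$ to transform. In either case, carefully tracking the normalizing constant $\tfrac{1+i}{16\pi}$ and the metaplectic multiplier $\left(\tfrac{c}{d}\right)\eps_d$ is the delicate, computation-heavy core of the argument.
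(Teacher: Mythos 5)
Your main route—realizing $\G$ as the value at $s=0$ of the analytically continued weight $\tfrac 32$ real-analytic Eisenstein series on $\Gamma_0(4)$ via Hecke's trick, with the Fourier coefficients identified as $H(n)$ through Dirichlet's class number formula and the regularization producing $\calR$—is exactly the approach the paper sketches (citing Hecke and Hirzebruch--Zagier), so the proposal is correct and essentially the same. Your additional unfolding of $\calR$ into an incomplete-Gamma expansion to verify harmonicity, growth, and the shadow is a sound supplement that the paper leaves implicit.
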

The idea of the proof is to write $\calH$ as a linear combination of Eisenstein series of weight $\tfrac 32$, in analogy to the proof of \cite[Theorem 3.1]{Coh75}. These series diverge, but using an idea of Hecke (cf. \cite[\S 2]{Hecke}), who used it to derive the transformation law of the weight $2$ Eisenstein series $E_2$, one finds the non-holomorphic completion term $\calR$.

\mbox{}\\
It is easy to check that $\G$ is indeed a harmonic weak Maa{\ss} form of weight $\tfrac 32$. As a mock modular form, the function $\calH$ is rather peculiar since it is basically the only example of such an object which  is holomorphic at the cusps of $\Gamma_0(4)$ (cf. \cite[Section 7]{Zag12}).
\subsection{Appell-Lerch Sums}\label{secAppell}
In this subsection we are going to recall some general facts about Appell-Lerch sums. For details, we refer the reader to \cite{Zwegers10,ZwegersDiss}.
\begin{definition}
Let $\tau\in\bbH$ and $u,v\in\C\setminus(\Z\tau+\Z)$. The \emph{Appell-Lerch sum} of level $1$ is then the following expression:
\[A_1(u,v)=A_1(u,v;\tau):=a^{1/2}\sum\limits_{n\in\Z} \frac{(-1)^{n} q^{n(n+1)/2}b^n}{1-aq^n}\]
where $a=e^{2\pi i u},\,b=e^{2\pi i v},$ and $q=e^{2\pi i \tau}$.
\end{definition}
In addition, we define the following real-analytic functions.
\begin{align}
\label{R} R(u;\tau)&:=\sum\limits_{\nu\in\tfrac 12+\Z}\left\lbrace \sign(\nu)-E\left(\left(\nu+\frac{\Im u}{y}\right)\sqrt{2y} \right) \right\rbrace (-1)^{\nu -1/2}q^{-\nu^2/2}e^{-2\pi i\nu u},\\
\label{E} E(t)&:=2\int\limits_0^t e^{-\pi u^2}d u=\sign(t)\left(1-\beta\left(t^2\right)\right), \\
\label{beta} \beta(x)&:=\int\limits_x^\infty u^{-1/2}e^{-\pi u}d u,
\end{align}
where for the second equality in \eqref{E} we refer to \cite[Lemma 1.7]{ZwegersDiss}.

This function $R$ has some nice properties, a few of which are collected in the following Propositions.
\begin{proposition}(cf. \cite[Proposition 1.9]{ZwegersDiss})\\\label{Rfunc}
The function $R$ fulfills the elliptic transformation properties
\begin{enumerate}[(i)]
\item $R(u+1;\tau)=-R(u;\tau)$
\item $R(u;\tau)+e^{-2\pi iu-\pi i\tau}R(u+\tau;\tau)=2e^{-\pi iu-\pi i\tau/4}$
\item $R(-u)=R(u)$.
\end{enumerate}
\end{proposition}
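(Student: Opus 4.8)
The plan is to verify each of the three identities directly from the defining series \eqref{R}; the first and third are essentially term-by-term manipulations, while the second requires a shift of the summation index. Throughout I would write $\nu = m + \tfrac12$ with $m\in\Z$ and keep track of the four ingredients of each summand: the sign factor $(-1)^{\nu-1/2} = (-1)^m$, the Gaussian weight $q^{-\nu^2/2}$, the exponential $e^{-2\pi i\nu u}$, and the ``error'' factor $\sign(\nu) - E\big((\nu + \tfrac{\Im u}{y})\sqrt{2y}\big)$.

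For (i), replacing $u$ by $u+1$ changes neither $\Im u$, nor $q^{-\nu^2/2}$, nor the error factor, and affects the exponential only through $e^{-2\pi i\nu(u+1)} = e^{-2\pi i\nu}e^{-2\pi i\nu u} = -e^{-2\pi i\nu u}$, since $e^{-2\pi i\nu} = e^{-\pi i} = -1$ for $\nu\in\tfrac12+\Z$; hence every summand is negated and $R(u+1;\tau) = -R(u;\tau)$. For (iii), I would send $u\mapsto -u$ and simultaneously reindex $\nu\mapsto -\nu$ (a bijection of $\tfrac12+\Z$). The Gaussian weight and the exponential $e^{-2\pi i\nu u}$ are invariant under this simultaneous change, while $(-1)^{\nu-1/2}$ picks up a sign and, using that $E$ is odd by \eqref{E}, the error factor also picks up a sign; the two signs cancel, giving $R(-u;\tau) = R(u;\tau)$.

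The core of the proposition is (ii). Here I would substitute $u\mapsto u+\tau$ in \eqref{R} and multiply by $e^{-2\pi i u - \pi i\tau}$. The shift sends $e^{-2\pi i\nu u}\mapsto e^{-2\pi i\nu u}q^{-\nu}$ and $\tfrac{\Im u}{y}\mapsto \tfrac{\Im u}{y}+1$, so that after collecting the prefactor the Gaussian exponent becomes $-\tfrac12\nu^2 - \nu - \tfrac12 = -\tfrac12(\nu+1)^2$ and the exponential becomes $e^{-2\pi i(\nu+1)u}$. This is exactly the data of the $R(u;\tau)$-summand indexed by $\mu := \nu+1\in\tfrac12+\Z$, except that the error factor reads $\sign(\mu-1) - E\big((\mu + \tfrac{\Im u}{y})\sqrt{2y}\big)$ and an extra sign is produced from $(-1)^{\nu-1/2} = -(-1)^{\mu-1/2}$. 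Adding this to the $\mu$-th term of $R(u;\tau)$, the two $E$-contributions cancel and I am left with $\big(\sign(\mu) - \sign(\mu-1)\big)(-1)^{\mu-1/2}q^{-\mu^2/2}e^{-2\pi i\mu u}$. The difference of signs vanishes for every $\mu\in\tfrac12+\Z$ except $\mu = \tfrac12$, where it equals $2$; evaluating the surviving term at $\mu = \tfrac12$ gives $2\,q^{-1/8}e^{-\pi i u} = 2e^{-\pi i u - \pi i\tau/4}$, the asserted right-hand side.

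The one point that needs genuine care---and the only place where this is more than bookkeeping---is justifying the reindexing $\nu\mapsto\nu+1$ in (ii) (and the simultaneous reindexing in (iii)), i.e. establishing absolute convergence of \eqref{R}. Note that $|q^{-\nu^2/2}| = e^{\pi y\nu^2}$ \emph{grows}, so convergence must come entirely from the error factor. Using the second equality in \eqref{E}, for $|\nu|$ large the factor equals $\sign(\nu)\,\beta\big((\nu + \tfrac{\Im u}{y})^2\,2y\big)$, and by \eqref{beta} one has $\beta(x) = O\big(x^{-1/2}e^{-\pi x}\big)$ as $x\to\infty$; since the argument grows like $2y\nu^2$, the error factor decays like $e^{-2\pi y\nu^2}$, which dominates $e^{\pi y\nu^2}$ and forces super-exponential decay of the summands. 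This makes \eqref{R} absolutely and locally uniformly convergent, legitimizes the term-by-term rearrangements above, and incidentally shows that $R$ is real-analytic, so that all the formal manipulations in (i)--(iii) are valid.
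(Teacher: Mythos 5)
Your proof is correct, and it follows the same direct computation from the defining series that the cited source (Zwegers' thesis, Proposition 1.9) uses; the paper itself gives no proof, only the citation. The reindexing $\nu\mapsto\nu+1$ with the telescoping of $\sign(\mu)-\sign(\mu-1)$ at $\mu=\tfrac12$, and the convergence justification via the $\beta$-decay of the error factor, are exactly the standard argument.
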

The following proposition has already been mentioned in \cite{BZ10}. The proof is a straightforward computation.
\begin{proposition}\label{Heat}
The function $R$ lies in the kernel of the renormalized Heat operator $2D_\tau+D_u^2$, hence
\begin{equation}\label{eqHeat}
D_u^2R=-2D_\tau R.
\end{equation}
\end{proposition}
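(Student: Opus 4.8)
The plan is to apply the renormalized heat operator $2D_\tau + D_u^2$ directly to the defining series \eqref{R}, differentiating term by term. Since $R$ is real-analytic rather than holomorphic, here $D_u$ and $D_\tau$ must be read as the renormalized Wirtinger derivatives $\tfrac{1}{2\pi i}\partial_u$ and $\tfrac{1}{2\pi i}\partial_\tau$. Termwise differentiation is legitimate: writing $t=\left(\nu+\tfrac{\Im(u)}{y}\right)\sqrt{2y}$ and using \eqref{E}, for $|\nu|$ large the incomplete-Gaussian factor equals $\sign(\nu)\beta(t^2)$, and $\beta$ decays exponentially, so each summand is $O(e^{-\pi\nu^2 y})$ as $|\nu|\to\infty$, locally uniformly. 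A short estimate shows the same for its $u$- and $\tau$-derivatives (which only introduce extra Gaussian factors $e^{-\pi t^2}$ or polynomial factors in $\nu$), so this decay beats the growth $e^{\pi\nu^2 y}$ of $q^{-\nu^2/2}$ and the series may be differentiated inside the sum.

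Fixing $\nu$, I would split its summand, written as $\left[\sign(\nu)-E(t)\right]\cdot(-1)^{\nu-1/2}\,e^{-\pi i\nu^2\tau-2\pi i\nu u}$, into its holomorphic part carrying $\sign(\nu)$ and the genuinely non-holomorphic part carrying $-E(t)$. The holomorphic part is a constant times $e^{-\pi i\nu^2\tau-2\pi i\nu u}$; on this function $D_u$ acts as multiplication by $-\nu$ and $D_\tau$ as multiplication by $-\tfrac{\nu^2}{2}$, so $(2D_\tau+D_u^2)$ contributes $-\nu^2+\nu^2=0$. Hence the entire theta-like holomorphic part of $R$ lies in the kernel automatically, and everything reduces to the $E$-terms. (Note the splitting is applied to each finite two-term summand only; the two resulting series do not individually converge, so it is their termwise heat-operator images, both zero, that are summed.)

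The substance is then to show $(2D_\tau+D_u^2)\Phi_\nu=0$ for $\Phi_\nu:=E(t)\,e^{-\pi i\nu^2\tau-2\pi i\nu u}$. Using $E'(t)=2e^{-\pi t^2}$ together with $\partial_u\Im(u)=\tfrac{1}{2i}$ and $\partial_\tau y=\tfrac{1}{2i}$, I would record the two chain-rule identities $\partial_u t=\tfrac{\sqrt{2y}}{2iy}$ and $\partial_\tau t=\tfrac{\nu}{2i}\sqrt{2/y}-\tfrac{t}{4iy}$, the latter after re-expressing $\tfrac{\Im(u)}{y}=\tfrac{t}{\sqrt{2y}}-\nu$. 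The product rule then makes both $D_u^2\Phi_\nu$ and $2D_\tau\Phi_\nu$ into the same three building blocks: a Gaussian term $\tfrac{t}{y}\,e^{-\pi t^2}e^{-\pi i\nu^2\tau-2\pi i\nu u}$, a Gaussian term $\nu\sqrt{2/y}\,e^{-\pi t^2}e^{-\pi i\nu^2\tau-2\pi i\nu u}$, and the term $\nu^2 E(t)\,e^{-\pi i\nu^2\tau-2\pi i\nu u}$, each appearing in the two operators with exactly opposite coefficients. Adding, all three pairs cancel, giving $(2D_\tau+D_u^2)\Phi_\nu=0$; summing over $\nu$ yields \eqref{eqHeat}.

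The only real obstacle is the bookkeeping in this final step. Because $t$ depends on $\tau$ through both $y^{1/2}$ and $y^{-1/2}$, the derivative $\partial_\tau t$ splits into two competing contributions, and the cancellation against the $D_u^2$-part becomes visible only after the $\Im(u)\,y^{-3/2}$ term is rewritten in terms of $t$. Keeping the half-integer powers of $y$ and the factors of $i$ straight is where care is required; once $\partial_u t$ and $\partial_\tau t$ are recorded correctly, the vanishing is forced by the matching $+\nu^2$ versus $-\nu^2$ on the $E(t)$-terms and the opposite signs carried by the two Gaussian terms.
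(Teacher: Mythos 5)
Your proposal is correct and is precisely the ``straightforward computation'' that the paper asserts but does not write out (it only cites \cite{BZ10}): the termwise application of $2D_\tau+D_u^2$ to \eqref{R}, the trivial cancellation $-\nu^2+\nu^2=0$ on the $\sign(\nu)$-terms, and the three-way cancellation on the $E(t)$-terms all check out, and your chain-rule identities $\partial_u t=\tfrac{1}{2i}\sqrt{2/y}$ and $\partial_\tau t=\tfrac{\nu}{2i}\sqrt{2/y}-\tfrac{t}{4iy}$ are exactly right. Your justification of termwise differentiation via the bound $\sign(\nu)-E(t)=\sign(\nu)\beta(t^2)=O(e^{-2\pi\nu^2y})$ for large $|\nu|$ is also sound, so nothing is missing.
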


We now define the non-holomorphic function
\[\widehat{A}_1(u,v;\tau)=A_1(u,v;\tau)+\frac{i}{2}\Theta(v;\tau)R(u-v;\tau)\]
which will henceforth be referred to as the \emph{completion} of the Appell-Lerch sum $A_1$.
\begin{theorem}(\cite[Theorem 2.2]{Zwegers10})\\
The real-analytic function $\widehat{A}_1$ transforms like a Jacobi form of weight $1$ and index $\begin{pmatrix} -1 & 1 \\ 1 & 0 \end{pmatrix}$:
\begin{enumerate}
\item[(S)] $\widehat{A}_1(-u,-v)=-\widehat{A}_1(u,v)$.
\item[(E)] $\widehat{A}_1(u+\lambda_1\tau+\mu_1,v+\lambda_2\tau+\mu_2)=(-1)^{(\lambda_1+\mu_1)}a^{\lambda_1-\lambda_2}b^{-\lambda_1}
q^{\lambda_1^2/2-\lambda_1\lambda_2}\widehat{A}_1(u,v)$ for $\lambda_i,\mu_i\in\Z$.
\item[(M)] $\widehat{A}_1(\frac{u}{c\tau+d},\frac{v}{c\tau+d};\frac{a\tau+b}{c\tau+d})=(c\tau+d)e^{\pi i c(-u^2+2uv)/(c\tau+d)}\widehat{A}_1(u,v;\tau)$\\
for $\gamma=\begin{pmatrix} a & b \\ c & d \end{pmatrix}\in\SL_2(\Z)$.
\end{enumerate}
\end{theorem}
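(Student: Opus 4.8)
The plan is to verify the three transformation laws (S), (E), (M) separately, the point throughout being that the non-holomorphic term $\tfrac{i}{2}\Theta(v;\tau)R(u-v;\tau)$ is engineered precisely to repair the failure of the \emph{holomorphic} Appell--Lerch sum $A_1$ to be a genuine Jacobi form. Write $a=e^{2\pi i u}$, $b=e^{2\pi i v}$, $q=e^{2\pi i\tau}$. The symmetry (S) is elementary: substituting $n\mapsto -n$ in the defining series $a^{1/2}\sum_n(-1)^nq^{n(n+1)/2}b^n/(1-aq^n)$ and simplifying the geometric factor via $1-a^{-1}q^{-n}=-a^{-1}q^{-n}(1-aq^n)$ gives $A_1(-u,-v)=-A_1(u,v)$; from \eqref{Theta} one reads off $\Theta(-v;\tau)=-\Theta(v;\tau)$, and $R(-w;\tau)=R(w;\tau)$ is exactly \Cref{Rfunc}(iii), so the completion term is likewise odd and (S) follows by addition.

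For the elliptic law (E) I would reduce to the four unit shifts $u\mapsto u+1$, $v\mapsto v+1$, $u\mapsto u+\tau$, $v\mapsto v+\tau$ and then iterate, since the multiplier $(-1)^{\lambda_1+\mu_1}a^{\lambda_1-\lambda_2}b^{-\lambda_1}q^{\lambda_1^2/2-\lambda_1\lambda_2}$ is a $1$-cocycle. The integer shifts are immediate from the manifest periodicity of the Appell sum, the quasi-periodicity of $\Theta$ under $v\mapsto v+1$, and $R(w+1)=-R(w)$ from \Cref{Rfunc}(i). The substantial identities are the $\tau$-shifts: under $u\mapsto u+\tau$ the index shift $n\mapsto n+1$ in the Appell sum produces a boundary term, and this is matched exactly by the quasi-periodicity $R(w;\tau)+e^{-2\pi i w-\pi i\tau}R(w+\tau;\tau)=2e^{-\pi i w-\pi i\tau/4}$ of \Cref{Rfunc}(ii) together with the quasi-periodicity of $\Theta$; assembling the factors reproduces the claimed multiplier.

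For the modular law (M) I would use that $S=\left(\begin{smallmatrix}0&-1\\1&0\end{smallmatrix}\right)$ and $T=\left(\begin{smallmatrix}1&1\\0&1\end{smallmatrix}\right)$ generate $\SL_2(\Z)$ and that the automorphy factor $(c\tau+d)e^{\pi i c(-u^2+2uv)/(c\tau+d)}$ satisfies the cocycle relation, so it suffices to treat $T$ and $S$. The translation $T:\tau\mapsto\tau+1$ is direct from the $q$-expansions of $A_1$, $\Theta$ and $R$ (the last read off from \eqref{R}), all of which transform by explicit roots of unity combining to the $c=0$ case. The inversion $S:\tau\mapsto-1/\tau$ is the heart of the matter: the holomorphic sum $A_1$ does \emph{not} transform cleanly, its $S$-anomaly being a Mordell integral $h(u-v;\tau)=\int_{\R}e^{\pi i\tau x^2-2\pi(u-v)x}/\cosh(\pi x)\,dx$ obtained by Poisson summation applied to the Appell sum, while $R$ has an $S$-transformation whose anomaly is the \emph{same} Mordell integral with opposite sign, derived from the integral representation \eqref{R} by splitting the error function, shifting contours, and invoking the classical transformation of $\vartheta$ (with the heat equation \eqref{eqHeat} of \Cref{Heat} controlling the $u$-dependence). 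Multiplying by the $S$-transformation of $\Theta$ and adding, the two Mordell integrals cancel and leave precisely the automorphy factor in (M).

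I expect the $S$-transformation in (M) to be the main obstacle, and within it the assertion that the modular anomaly of $A_1$ and that of the completion $\tfrac{i}{2}\Theta R$ are given by one and the same Mordell integral. This requires the Mordell-integral calculus together with the full modular transformation of $R$, which goes beyond the elliptic identities of \Cref{Rfunc} and the heat equation of \Cref{Heat} recorded above; indeed the completion term exists exactly so that this cancellation takes place. Once that identity is in hand, the remaining bookkeeping---tracking the weight $1$ and the index matrix $\left(\begin{smallmatrix}-1&1\\1&0\end{smallmatrix}\right)$ through the products of multipliers---is routine.
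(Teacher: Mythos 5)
The paper does not prove this theorem at all: it is imported verbatim from \cite[Theorem 2.2]{Zwegers10}, so there is no internal argument to measure your attempt against. Your outline is essentially a correct reconstruction of Zwegers' own proof (in the thesis \cite{ZwegersDiss} the same statement appears for $\widehat{\mu}=\widehat{A}_1/\Theta$): (S) via the substitution $n\mapsto -n$ together with the oddness of $\Theta$ and \Cref{Rfunc}(iii); (E) by reduction to the four unit shifts plus the cocycle property of the multiplier, with the boundary term produced by re-indexing the Appell sum under $u\mapsto u+\tau$ absorbed by \Cref{Rfunc}(ii) and the quasi-periodicity of $\Theta$; (M) by generators, with the $S$-anomalies of $A_1$ and of $\tfrac{i}{2}\Theta(v;\tau)R(u-v;\tau)$ both given by a Mordell integral and cancelling. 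The computations you actually carry out ((S), the unit shifts) check out.

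What keeps this from being a complete proof is the point you flag yourself: the entire content of (M) for $S$ is the identity $R(u;\tau)-(-i\tau)^{-1/2}e^{\pi i u^2/\tau}R(u/\tau;-1/\tau)=2h(u;\tau)$, with $h$ the Mordell integral, and this is \emph{not} among the properties of $R$ available in this paper --- \Cref{Rfunc} and \Cref{Heat} record only the elliptic behaviour and the heat equation --- so the crucial cancellation is asserted rather than derived. Two cautions should you try to fill it in. First, the $S$-anomaly of the Lerch sum is classically obtained by a contour-shift/residue argument (or by verifying the functional equations that characterize $h$), not by termwise Poisson summation on the Appell sum. Second, Zwegers' derivation of the $R$-side does not proceed by ``splitting the error function and shifting contours'': he shows that the left-hand side above is holomorphic in $u$ (the non-holomorphic obstructions cancel via the modular transformation of the unary theta series, i.e.\ the shadow computation) and satisfies the two shift equations that determine $h$ uniquely among holomorphic functions; the heat equation of \Cref{Heat} plays no role there. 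With that one external theorem granted, the rest of your bookkeeping for (E) and (M) is sound.
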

\section{Some Lemmas}\label{secId}
As we mentioned in the introduction, we would like to relate the two summands for each coefficient in the power series in \Cref{conjCohen} to some sort of modular object. For that purpose, we recall the definition of Rankin-Cohen brackets as given in \cite[p. 53]{MF123}, which differs slightly (see below) from the original one in \cite[Theorem 7.1]{Coh75}.
\begin{definition}
Let $f,g$ be smooth functions defined on the upper half plane and $k,\ell\in\R_{>0}$, $n\in\N_0$. Then we define the $n$-th \emph{Rankin-Cohen bracket} of $f$ and $g$ as
\[[f,g]_n=\sum\limits_{r+s=n} (-1)^r {{k+n-1} \choose s} {{\ell+n-1} \choose r} D^rfD^sg\]
where for non-integral entries we define
\[{m \choose s}:=\frac{\Gamma(m+1)}{\Gamma(s+1)\Gamma(m-s+1)}.\]
Here, the letter $\Gamma$ denotes the usual Gamma function. 
\end{definition}
It is well-known (cf. \cite[Theorem 7.1]{Coh75}) that if $f,g$ transform like modular forms of weight $k$ and $\ell$ resp. then $[f,g]_n$ transforms like a modular form of weight $k+\ell+2n$ and that $[f,g]_0=f\cdot g$. The interaction of the first Rankin-Cohen bracket, which itself fulfills the Jacobi identity of Lie brackets, and the regular product of modular forms give the graded algebra of modular forms the additional structure of a so-called Poisson algebra (cf. \cite[p. 53]{MF123}).

Note that our definition of the Rankin-Cohen bracket differs from the one in \cite[Theorem 7.1]{Coh75} by a factor of $n!(-2\pi i)^n$ which guarantees that if $f$ and $g$ have integer Fourier coefficients, so does $[f,g]_n$.

\begin{lemma}\label{conjMod}
The coefficient of $X^{2k}$ in \eqref{eqCohen} is given by
\begin{equation}\label{eqCohen2}
\frac{c_k}{2}\left([\calH,\vartheta]_k(\tau)-[\calH,\vartheta]_k\left(\tau+\tfrac 12\right)\right)+\Lambda_{2k+1,odd}(\tau),
\end{equation}
where $c_k=k!\frac{\sqrt{\pi}}{\Gamma(k+\tfrac 12)}$ and 
\begin{equation}
\Lambda_{\ell,odd}(\tau):=\sum\limits_{n=0}^\infty \lambda_\ell(2n+1)q^{2n+1}
\end{equation}
with $\lambda_\ell$ as in \eqref{lambda}. The coefficient of $X^{2k+1}$ is identically $0$.
\end{lemma}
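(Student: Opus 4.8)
The plan is to expand the coefficient of $X^\ell$ in \eqref{eqCohen} directly and match it, term by term, against \eqref{eqCohen2}. The second summand in \eqref{eqCohen} is already supported only on odd $n$ and contributes $\lambda_{2k+1}(n)$ to the coefficient of $X^{2k}$ and nothing to odd powers of $X$; this accounts immediately for the $\Lambda_{2k+1,odd}$ piece and for half of the claim that the coefficient of $X^{2k+1}$ vanishes. So the real content is to show that the first (class-number) summand, expanded as a power series in $X$, reproduces $\tfrac{c_k}{2}\left([\calH,\vartheta]_k(\tau)-[\calH,\vartheta]_k(\tau+\tfrac12)\right)$ in even degree and contributes nothing in odd degree.

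First I would expand the rational factor $\frac{1}{1-2sX+nX^2}$ as a power series in $X$. Writing $n-s^2 = m$, the denominator factors over $\C$, and the generating identity for Gegenbauer (or Chebyshev-type) polynomials gives a clean formula
\begin{equation}
\frac{1}{1-2sX+nX^2}=\sum_{\ell=0}^\infty P_\ell(s,n)\,X^\ell,
\end{equation}
where $P_\ell(s,n)$ is a homogeneous polynomial whose symmetry under $s\mapsto -s$ is $P_\ell(-s,n)=(-1)^\ell P_\ell(s,n)$. Since $H(n-s^2)$ is even in $s$ and the sum $\sum_{s^2\le n}$ is symmetric in $s$, every odd-$\ell$ term cancels; this is exactly the vanishing of the coefficient of $X^{2k+1}$. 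Thus I would extract the coefficient of $X^{2k}$ as $\sum_{s^2\le n} H(n-s^2)\,P_{2k}(s,n)$ and compare it with the Rankin-Cohen expression.

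The decisive step is to identify $\sum_{s}H(n-s^2)P_{2k}(s,n)$ with the $n$-th Fourier coefficient of $\tfrac{c_k}{2}\left([\calH,\vartheta]_k(\tau)-[\calH,\vartheta]_k(\tau+\tfrac12)\right)$. Here I would compute the Rankin-Cohen bracket $[\calH,\vartheta]_k$ explicitly: by definition it is $\sum_{r+s=k}(-1)^r\binom{k-\tfrac12}{s}\binom{k-\tfrac12}{r}D^r\calH\,D^s\vartheta$, and since $D^r\calH=\sum H(m)m^r q^m$ and $D^s\vartheta=\sum_{t} t^{2s}q^{t^2}$, its $n$-th coefficient is a sum over $m+t^2=n$ of $H(m)$ times a polynomial in $m=n-t^2$ and $t^2$. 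The operation $f(\tau)\mapsto f(\tau)-f(\tau+\tfrac12)$ simply doubles the coefficients on odd $n$ and kills those on even $n$, which is why the odd-support function $\Lambda_{2k+1,odd}$ appears and why the factor $\tfrac12$ is there. The main obstacle will be the purely algebraic verification that, after collecting the binomial coefficients from the Rankin-Cohen bracket and incorporating the normalizing constant $c_k=k!\sqrt{\pi}/\Gamma(k+\tfrac12)$, the resulting polynomial in $(t^2,n)$ agrees with $P_{2k}(t,n)$ from the generating-function expansion. I expect this to reduce to a known hypergeometric/Gegenbauer identity relating the coefficients of $(1-2sX+nX^2)^{-1}$ to the weighted sum $\sum_{r+s=k}(-1)^r\binom{k-\tfrac12}{s}\binom{k-\tfrac12}{r}(n-t^2)^r t^{2s}$; matching the $s$-dependence (equivalently, the dependence on the discriminant $n-s^2$ versus $s^2$) is where the calculation is least transparent, and verifying it is the heart of the lemma. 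Since the paper explicitly defers long computations to \cite{Mert14}, I would state the generating-function expansion and the Rankin-Cohen coefficient formula precisely and then assert their equality, relegating the bookkeeping of binomial coefficients to the thesis.
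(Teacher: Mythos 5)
Your outline is correct and would yield the lemma, but it is more self-contained than the paper's argument, which simply quotes the identity $S_D^f(\tau;X)=\sum_{n\ge 0}\frac{\sqrt{\pi}\,\Gamma(n+k-1/2)}{\Gamma(n+1/2)\Gamma(k-1/2)}[g,\vartheta]_n(\tau)$ from p.~283 of \cite{Coh75} and specializes it to $f=\calH$, $k=\tfrac32$, $D=1$; the passage to odd $n$ via $\tfrac12\left(F(\tau)-F(\tau+\tfrac12)\right)$ and the $\Lambda_{2k+1,odd}$ term are handled exactly as you describe. What you propose instead is to reprove the $k=\tfrac32$ case of Cohen's identity from scratch: expand $(1-2sX+nX^2)^{-1}=\sum_\ell n^{\ell/2}U_\ell(s/\sqrt n)X^\ell$ in Chebyshev polynomials of the second kind, use $U_\ell(-x)=(-1)^\ell U_\ell(x)$ together with the $s\mapsto-s$ symmetry of the inner sum to kill the odd powers of $X$ (a clean way to get the second assertion), and match the even coefficients against the Fourier coefficients of $c_k[\calH,\vartheta]_k$. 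That matching is a genuine polynomial identity which does hold --- for $k=1$ it reads $4s^2-n=2\left(\tfrac32 s^2-\tfrac12(n-s^2)\right)$ --- but two points need attention. First, your Rankin--Cohen coefficients are transcribed incorrectly: since $\calH$ has weight $\tfrac32$ and $\vartheta$ has weight $\tfrac12$, the bracket is $\sum_{r+j=k}(-1)^r\binom{k+1/2}{j}\binom{k-1/2}{r}D^r\calH\,D^j\vartheta$, not $\binom{k-1/2}{j}\binom{k-1/2}{r}$; with your symmetric binomials the $k=1$ check already fails. Second, the hypergeometric identity you propose to ``assert'' is precisely the content of Cohen's computation, so rather than leaving it as unverified bookkeeping you should either close it by citing \cite[Theorem 6.1 and p.~283]{Coh75} --- which collapses your argument into the paper's --- or actually carry out the comparison of the coefficient of $s^{2j}n^{k-j}$ on both sides, which is where Legendre's duplication formula and the constant $c_k=k!\sqrt{\pi}/\Gamma(k+\tfrac12)$ enter.
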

\begin{proof}
As in \cite[Theorem 6.1]{Coh75} we define for a modular form $f$ with $f(\tau)=\sum_{n=0}^\infty a(n)q^n$ of weight $k$ and an integer $D\neq 0$ the series
\[S_D^f(\tau;X):=\sum\limits_{n=0}^\infty \left(\sum\limits_{\substack{s\in\Z\\ s^2\leq n}}\frac{a\left(\frac{n-s^2}{\vert D\vert}\right)}{(1-sX+nX^2)^{k-\tfrac 12}}\right)q^n,\]
where we assume $a(n)=0$ if $n\notin\N_0$.
From p. 283 in \cite{Coh75} we immediately get the formula 
\[S_D^f(\tau;X)=\sum\limits_{n=0}^\infty \frac{\sqrt{\pi}\Gamma\left(n+k-\tfrac 12\right)}{\Gamma\left(n+\tfrac 12\right)\Gamma\left(k-\tfrac 12\right)}[g,\vartheta]_n(\tau) ,\]
where $g(\tau):=f(\vert D\vert\tau)$. This yields the assertion by plugging in $f=\calH$, $k=\tfrac 32$, and $D=1$.
\end{proof}
Since in the Rankin-Cohen brackets that we consider here, we have linear combinations of products of derivatives of a mock modular form and a regular modular form, one could call an object like this a \emph{quasi mixed mock modular form.}
\begin{lemma}\label{lemAppell}
For odd $k\in\N$, the function $\Lambda_{k,odd}$ can be written as a linear combination of derivatives of Appell-Lerch sums. More precisely
\[\Lambda_{k,odd} =\frac 12\left(D_v^kA_1^{odd}\right)\left(0,\tau+\tfrac 12;2\tau\right),\]
where we define
\begin{align*}
A_1^{odd}(u,v;\tau):=&a^{1/2}\sum\limits_{\substack{ n\in\Z\\ n\text{ odd}}} \frac{(-1)^{n} q^{n(n+1)/2}b^n}{1-aq^n}\\ 
                       =&\frac 12\left(A_1(u,v;\tau)-A_1(u,v+\tfrac 12;\tau)\right),
                       \end{align*}
where again $a=e^{2\pi iu}$ and $b=e^{2\pi i v}$.                      
\end{lemma}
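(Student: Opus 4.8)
The plan is to reduce everything to $q$-expansions and compare Fourier coefficients. First I would specialize $A_1^{odd}$ at $u=0$: there $a=1$, so $a^{1/2}=1$, and the term $n=0$ (which would be singular for the full sum $A_1$) simply does not occur, since the summation in $A_1^{odd}$ runs over odd $n$ only. Because $D_v$ acts on $b^n=e^{2\pi i n v}$ by $D_v^k b^n = n^k b^n$, differentiating termwise gives
\[\left(D_v^k A_1^{odd}\right)(0,v;\tau) = \sum_{\substack{n\in\Z\\ n\text{ odd}}}\frac{(-1)^n\, n^k\, q^{n(n+1)/2}\,e^{2\pi i n v}}{1-q^n}.\]

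Next I would carry out the substitution dictated by the right-hand side, i.e. $v=\tau+\tfrac12$ together with $\tau\mapsto 2\tau$ in the third slot. This replaces the internal $q$ by $q^2$ and sets $b=e^{2\pi i(\tau+1/2)}=-q$, so that $q^{n(n+1)/2}\mapsto q^{n(n+1)}$ and $b^n=(-1)^n q^n$. The factor $(-1)^n$ produced by $b^n=(-q)^n$ cancels against the sign $(-1)^n$ already present, and after combining exponents ($n(n+1)+n=n^2+2n$) the expression collapses to
\[\left(D_v^k A_1^{odd}\right)\!\left(0,\tau+\tfrac12;2\tau\right)=\sum_{\substack{n\in\Z\\ n\text{ odd}}}\frac{n^k\,q^{n^2+2n}}{1-q^{2n}}.\]

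I would then split the sum into $n>0$ and $n<0$ and expand the geometric factor. For $n>0$ one has $|q^{2n}|<1$ and expands $1/(1-q^{2n})$ directly; for $n<0$, writing $n=-\ell$ with $\ell>0$, one has $|q^{2n}|>1$ and must expand about infinity, using $1/(1-q^{-2\ell})=-\sum_{m\geq1}q^{2\ell m}$. This is exactly the place where the hypothesis that $k$ is \emph{odd} is indispensable: it yields $(-\ell)^k=-\ell^k$, and this sign combines with the extra minus sign from the ``backward'' geometric expansion so that the negative-index terms \emph{add to}, rather than cancel, the positive-index contributions. After reindexing, the coefficient of $q^N$ (for odd $N$) becomes $2\sum_{d\mid N,\,d<\sqrt N}d^k$, plus an extra $(\sqrt N)^k$ precisely when $N$ is a perfect square.

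Finally I would identify this with $2\lambda_k(N)$. Pairing each divisor $d<\sqrt N$ with its complement $N/d>\sqrt N$ and noting $\min(d,N/d)=d$, together with the diagonal contribution when $N$ is a square, gives $\sum_{d\mid N}\min(d,N/d)^k = 2\sum_{d<\sqrt N}d^k + (\sqrt N)^k[N\text{ square}]=2\lambda_k(N)$, so the displayed $q$-series equals $2\Lambda_{k,odd}$ and the factor $\tfrac12$ in the statement produces the claim. I expect the main obstacle to be the careful bookkeeping of the $n<0$ terms—expanding $1/(1-q^{2n})$ in the correct direction and tracking the interplay of the several signs with the parity of $k$—since the rest is a direct coefficient comparison.
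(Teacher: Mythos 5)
Your proof is correct and is essentially the paper's argument run in the opposite direction: the paper massages $2\Lambda_{k,odd}$ into the Lambert series $\sum_{\ell\geq 0}(2\ell+1)^k\bigl(q^{(2\ell+1)^2}+q^{(2\ell+1)^2+2(2\ell+1)}\bigr)/\bigl(1-q^{2(2\ell+1)}\bigr)$ and declares the match with $\bigl(D_v^kA_1^{odd}\bigr)\bigl(0,\tau+\tfrac 12;2\tau\bigr)$ ``easily seen,'' whereas you expand the Appell--Lerch side and compare coefficients with $2\lambda_k(N)$. Your bookkeeping of the $n<0$ terms (the backward geometric expansion combining with $(-\ell)^k=-\ell^k$ for odd $k$) correctly supplies exactly the step the paper leaves implicit.
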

\begin{proof}
First we remark that the right-hand side of the identity to be shown is actually well-defined because as a function of $u$, $A_1(u,v;\tau)$ has simple poles in $\Z\tau+\Z$ (cf. \cite[Proposition 1.4]{ZwegersDiss}) which cancel out if the sum is only taken over odd integers. Thus the equation actually makes sense.

Then we write $\Lambda_{k,odd}$ as a $q$-series
\begin{align*}
2\Lambda_{k,odd}(\tau) 
= & \sum\limits_{\ell=0}^\infty\sum\limits_{m=0}^\infty \min(2\ell+1,2m+1)^k q^{(2\ell+1)(2m+1)} \\
= & 2\sum\limits_{\ell=0}^\infty \left((2\ell+1)^k \sum\limits_{r=1}^\infty q^{(2\ell+1)(2\ell+1+2r)}\right) +\sum\limits_{\ell=0}^\infty (2\ell+1)^k q^{(2\ell+1)^2} \\
= & 2\sum\limits_{\ell=0}^\infty (2\ell+1)^k q^{(2\ell+1)^2} \left(\frac{1}{1-q^{2(2\ell+1)}}-1\right)+\sum\limits_{\ell=0}^\infty (2\ell+1)^k q^{(2\ell+1)^2} \\
= & \sum\limits_{\ell=0}^\infty (2\ell+1)^k \left(\frac{q^{(2\ell+1)^2}+q^{(2\ell+1)^2+2(2\ell+1)}}{1-q^{2(2\ell+1)}}\right).
\end{align*}
This is easily seen to be the same as $\left(D_v^kA_1^{odd}\right)\left(0,\tau+\tfrac 12;2\tau\right)$.
\end{proof}
\begin{remark}
Now we can write down completions for each summand in \eqref{eqCohen2} and thus we see that the function
\begin{equation}\label{eqCohen3}
\frac{c_k}{2}\left([\G,\vartheta]_k(\tau)-[\G,\vartheta]_k\left(\tau+\tfrac 12\right)\right)+\left(D_v^{2k+1}\widehat{A}_1^{odd}\right)\left(0,\tau+\tfrac 12;2\tau\right)
\end{equation}
transforms like a modular form of weight $2k+2$. Because the Fourier coefficients of the holomorphic parts grow polynomially, they are holomorphic at the cusps as well. 

Thus it remains to show that the non-holomorphic parts given by
\begin{equation}\label{nonhol1}
\frac{c_k}{2}\left([\calR,\vartheta]_k(\tau)-[\calR,\vartheta]_k(\tau+\tfrac 12)\right)
\end{equation} 
and 
\begin{equation}\label{nonhol2}
\begin{aligned}
\frac{i}{4}&\left(\sum\limits_{\ell=0}^{2k+1} {{2k+1} \choose \ell} (-1)^\ell(D_u^\ell R)(-\tau-\tfrac 12;2\tau)(D_v^{2k-\ell+1}\Theta)(\tau+\tfrac 12;2\tau)\right.\\
 &-\left.\sum\limits_{\ell=0}^{2k+1} {{2k+1} \choose \ell} (-1)^\ell(D_u^\ell R)(-\tau-1;2\tau)(D_v^{2k-\ell+1}\Theta)(\tau+1;2\tau)\right)
 \end{aligned}
\end{equation}
are indeed equal up to sign and that the function in \eqref{eqCohen3} is modular on $\Gamma_0(4)$.
\end{remark}
This shows that we will need some specific information about the derivatives of the Jacobi theta series and the $R$-function evaluated at the torsion point $(\tau+\tfrac 12,2\tau)$.

A simple and straight forward calculation gives us the following result.
\begin{lemma}\label{diffTheta}
For $r\in\N_0$ one has
\begin{equation}\label{eqdiffTheta}
(D_v^r\Theta)(\tau+\tfrac 12;2\tau)=-q^{-1/4}\sum\limits_{s=0}^{\lfloor \tfrac r2\rfloor}{r \choose 2s } \left(-\frac 12\right)^{r-2s}(D_{\tau}^s\vartheta)(\tau)
\end{equation}
and
\begin{equation}
(D_v^r\Theta)(\tau+1;2\tau)=iq^{-1/4}\sum\limits_{s=0}^{\lfloor \tfrac r2\rfloor}{r \choose 2s } \left(-\frac 12\right)^{r-2s}(D_{\tau}^s\vartheta)(\tau+\tfrac 12),
\end{equation}
with $\Theta$ as in \eqref{Theta} and $\vartheta$ as in \eqref{theta}.
\end{lemma}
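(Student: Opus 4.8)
The plan is to compute both identities by differentiating the defining series \eqref{Theta} term by term and then tracking a few elementary substitutions, so that the only genuine work is the bookkeeping of phase factors and a parity argument. First I would note that, since $D_v=\tfrac{1}{2\pi i}\tfrac{d}{dv}$ acts on the exponential in \eqref{Theta} via $D_v e^{2\pi i\nu(v+1/2)}=\nu\,e^{2\pi i\nu(v+1/2)}$, applying $D_v^r$ simply pulls down a factor $\nu^r$, giving
\[
(D_v^r\Theta)(v;\tau)=\sum_{\nu\in\frac12+\Z}\nu^r q^{\nu^2/2}e^{2\pi i\nu(v+1/2)}.
\]
Substituting the modular argument $2\tau$ replaces the internal $q$ by $q^2$, hence $q^{\nu^2/2}$ by $q^{\nu^2}$, and then setting $v=\tau+\tfrac12$ turns the exponential into $q^{\nu}e^{2\pi i\nu}$.

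For $\nu\in\frac12+\Z$ one has $e^{2\pi i\nu}=-1$, which accounts for the global sign in \eqref{eqdiffTheta}. Writing $\nu=m+\tfrac12$ and completing the square gives $\nu^2+\nu=\nu(\nu+1)=(m+1)^2-\tfrac14$, so reindexing $n=m+1$ produces the prefactor $q^{-1/4}$ together with
\[
(D_v^r\Theta)(\tau+\tfrac12;2\tau)=-q^{-1/4}\sum_{n\in\Z}\bigl(n-\tfrac12\bigr)^r q^{n^2}.
\]
For the second identity I would instead put $v=\tau+1$; the only change is that the phase becomes $e^{3\pi i\nu}=-i\,(-1)^m$ in place of $-1$, which after the same reindexing inserts a factor $(-1)^n$ inside the sum and an overall $i$ in front.

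It then remains to expand $\bigl(n-\tfrac12\bigr)^r$ by the binomial theorem and to collect terms. Because $\sum_{n\in\Z}n^{r-j}q^{n^2}$ is invariant under $n\mapsto-n$, every summand with $r-j$ odd cancels, so only $r-j=2s$ survives, and for these $\sum_{n\in\Z}n^{2s}q^{n^2}=(D_\tau^s\vartheta)(\tau)$ directly from \eqref{theta}; rewriting $\binom{r}{r-2s}=\binom{r}{2s}$ yields \eqref{eqdiffTheta}. For the second formula the same cancellation applies to $\sum_{n\in\Z}n^{r-j}(-1)^nq^{n^2}$, and one identifies $\sum_{n\in\Z}n^{2s}(-1)^nq^{n^2}=(D_\tau^s\vartheta)(\tau+\tfrac12)$ using $\vartheta(\tau+\tfrac12)=\sum_{n\in\Z}(-1)^nq^{n^2}$, which holds because $n^2\equiv n\pmod 2$. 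The hard part is not conceptual but clerical: the one place where an error could creep in is the evaluation of the root-of-unity phases at the two torsion points $v=\tau+\tfrac12$ and $v=\tau+1$, together with keeping the parity cancellation consistent; once these are pinned down, everything else is routine.
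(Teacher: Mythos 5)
Your computation is correct and is exactly the ``simple and straightforward calculation'' the paper alludes to without writing out: term-by-term differentiation of \eqref{Theta}, evaluation of the root-of-unity phases at the torsion points, completing the square $\nu^2+\nu=(m+1)^2-\tfrac14$ to extract $q^{-1/4}$, and the binomial expansion with the parity cancellation identifying the even moments with $(D_\tau^s\vartheta)$. All the phase factors ($e^{2\pi i\nu}=-1$ and $e^{3\pi i\nu}=-i(-1)^m$) and the identification $\vartheta(\tau+\tfrac12)=\sum_n(-1)^nq^{n^2}$ check out, so nothing is missing.
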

\begin{lemma}\label{Rid}
The following identities are true:
\begin{align}
\label{R1} R\left(-\tau-\tfrac 12;2\tau\right)&=iq^{1/4}\\
\label{R2} R(-\tau-1;2\tau)&=-q^{1/4}\\
\label{DR1}\left(D_uR\right)\left(-\tau-\tfrac 12;2\tau\right)&=\frac{-1+i}{4\pi}q^{1/4}\int\limits_{-\overline\tau}^{i\infty} \frac{\vartheta(z)}{(z+\tau)^{3/2}}d z -\frac i2 q^{1/4}\\
\label{DR2}\left(D_uR\right)(-\tau-1;2\tau)&=-\frac{1+i}{4\pi}q^{1/4}\int\limits_{-\overline\tau}^{i\infty} \frac{\vartheta(z+\frac 12)}{(z+\tau)^{3/2}}dz+\frac 12q^{1/4}.
\end{align}
\end{lemma}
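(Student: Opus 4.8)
The plan is to treat the two value identities \eqref{R1}, \eqref{R2} and the two derivative identities \eqref{DR1}, \eqref{DR2} separately, since the former are formal consequences of the elliptic transformation law while the latter require the explicit real-analytic shape of $R$. For \eqref{R1} and \eqref{R2} I would use only \Cref{Rfunc}. Rewriting property (ii) with the modular variable $2\tau$ in place of $\tau$ gives
\[R(u;2\tau)+e^{-2\pi iu-2\pi i\tau}R(u+2\tau;2\tau)=2e^{-\pi iu-\pi i\tau/2}.\]
Evaluating this at $u=-\tau-\tfrac12$ turns the prefactor $e^{-2\pi iu-2\pi i\tau}$ into $-1$, the shifted argument $u+2\tau$ into $\tau-\tfrac12$, and the right-hand side into $2iq^{1/4}$; combining with periodicity (i) and evenness (iii), which together give $R(-\tau-\tfrac12;2\tau)=-R(\tau-\tfrac12;2\tau)$, collapses the relation to a single unknown and yields \eqref{R1}. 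Evaluating the same identity at $u=-\tau$ and using $R(-\tau-1;2\tau)=-R(\tau;2\tau)$ (again from (i), (iii)) gives \eqref{R2} in the same way.

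For the derivative identities I would differentiate the defining series \eqref{R} term by term, with $D_u=\tfrac{1}{2\pi i}\partial_u$ the holomorphic Wirtinger derivative. Each summand depends on $u$ through the exponential $e^{-2\pi i\nu u}$, whose derivative contributes a factor $-\nu$, and through the argument of $E$, which depends only on $\Im u$; by $E'(t)=2e^{-\pi t^2}$ the latter produces a Gaussian factor. Setting the modular variable to $2\tau$ and evaluating at $u=-\tau-\tfrac12$, where $\Im u/\Im(2\tau)=-\tfrac12$ so that the $E$-argument becomes $2\sqrt y\,m$ for $\nu=m+\tfrac12$, the phases collapse; after writing $E(t)=\sign(t)(1-\beta(t^2))$ as in \eqref{E}, one is left with three kinds of terms: a Gaussian theta sum $\sum_m e^{-4\pi m^2y}q^{-m^2}=\overline{\vartheta(\tau)}$, a $\beta$-series $\sum_{m\geq1}m\,\beta(4m^2y)q^{-m^2}$, and an elementary contribution $-\tfrac i2q^{1/4}$.

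It then remains to identify this combination with the Eichler integral in \Cref{ModcalH}. I would expand $\int_{-\overline\tau}^{i\infty}\vartheta(z)(z+\tau)^{-3/2}\,dz$ termwise, parametrize the contour by $z=-\overline\tau+it$ so that $z+\tau=i(2y+t)$, and integrate by parts to reduce $(z+\tau)^{-3/2}$ to $\beta$ (from \eqref{beta}) plus a boundary term; this expresses the integral as $e^{-\pi i/4}\bigl(\tfrac{\sqrt2}{\sqrt y}\overline{\vartheta(\tau)}-4\sqrt2\,\pi\sum_{m\geq1}m\,\beta(4m^2y)q^{-m^2}\bigr)$. Matching against the previous paragraph, the $\overline{\vartheta}$- and $\beta$-parts combine into exactly this integral, the branch and normalization factors $e^{\pi i/4}=\tfrac{1+i}{\sqrt2}$ together with $\tfrac{i}{2\pi}$ produce the constant $\tfrac{-1+i}{4\pi}$, and the leftover elementary term is $-\tfrac i2q^{1/4}$, which is \eqref{DR1}. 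Equation \eqref{DR2} is handled identically; the only change is that the torsion point $u=-\tau-1$ introduces a sign $(-1)^{n}$ in the theta sum, replacing $\vartheta(z)$ by $\vartheta(z+\tfrac12)$ and producing the constant $-\tfrac{1+i}{4\pi}$ together with the term $+\tfrac12q^{1/4}$.

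The main obstacle is this last identification: converting the non-holomorphic $\beta$-series into the period integral, i.e. the contour substitution together with the integration by parts, while keeping careful track of the square-root branches and phases so that the constants $\tfrac{-1+i}{4\pi}$ and $-\tfrac{1+i}{4\pi}$ come out correctly. One could instead quote Zwegers' integral representation of $R$ and differentiate it, but the direct term-by-term computation is self-contained given \eqref{R}--\eqref{beta} and \Cref{ModcalH}. Note also that differentiating the already-proven identities \eqref{R1}, \eqref{R2} in $\tau$ only produces a relation between $D_uR$ and the modular-variable derivative of $R$ at the torsion point, which underdetermines the value; the genuinely non-local content, namely the Eichler integral, must be extracted from the series as above.
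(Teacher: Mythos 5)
Your proposal is correct and follows essentially the same route as the paper: \eqref{R1}--\eqref{R2} from the elliptic transformation laws in \Cref{Rfunc}, and \eqref{DR1}--\eqref{DR2} by term-by-term differentiation of the series \eqref{R} at the torsion point and identification of the resulting Gaussian-plus-$\beta$ series with the termwise-expanded Eichler integral via one integration by parts (your constants, including $\tfrac{-1+i}{4\pi}$ and the leftover $-\tfrac i2q^{1/4}$, all check out). The only cosmetic differences are that the paper runs the identification in the opposite direction, rewriting $\beta$ as an incomplete Gamma function and quoting the standard integral representation $\int_{-\overline\tau}^{i\infty}e^{2\pi inz}(-i(z+\tau))^{-3/2}\,dz=i(2\pi n)^{1/2}q^{-n}\Gamma(-\tfrac 12;4\pi ny)$, and that it deduces \eqref{DR2} from \eqref{DR1} via $R(u;\tau+1)=e^{-\pi i/4}R(u;\tau)$ rather than recomputing at $u=-\tau-1$.
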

\begin{proof}
The identities \eqref{R1} and \eqref{R2} follow directly by applying the transformation properties $(iii)$, $(i)$, and $(ii)$ of $R$ in \Cref{Rfunc}.

We only show \eqref{DR1}, since \eqref{DR2} then also follows from the obvious fact that $R(u;\tau+1)=e^{-\pi i/4}R(u;\tau)$. From the definition of $R$ in \eqref{R} and \eqref{E} we see that
\[(D_uR)(-\tau-\tfrac 12;2\tau)=iq^{1/4} \sum\limits_{n\in\Z} \left(\frac{1}{\sqrt{4y}\pi} e^{-4\pi n^2y}-\sign(n)(n+\frac 12)\beta(4n^2y)\right)q^{-n^2},\]
with $\beta$ as in \eqref{beta}. Note that for convenience, we define $\sign(0):=1$.

By partial integration one gets for all $x\in\R_{\geq 0}$ that
\[\beta(x)=\frac 1\pi x^{-1/2} e^{-\pi x} -\frac{1}{2\sqrt{\pi}}\Gamma(-\frac 12; \pi x),\]
where again, $\Gamma(\alpha;x)$ denotes the incomplete Gamma function. Using the well-known fact that for $\tau\in\bbH$ and $n\in\N$ it holds that
\[\int\limits_{-\overline\tau}^{i\infty} \frac{e^{2\pi inz}}{(-i(z+\tau))^{3/2}}d z=i(2\pi n)^{1/2}q^{-n}\Gamma(-\frac 12;4\pi ny)\]
we get the assertion by a straightforward calculation.
\end{proof}
Now we take a closer look at \eqref{nonhol2}.
\begin{lemma}\label{lemRTheta}
For all $k\in\N_0$ it holds true that
\begin{align*}
 & \sum\limits_{\ell=0}^{2k+1} (-1)^\ell {{2k+1} \choose \ell} (D_u^\ell R)(-\tau-\tfrac 12;2\tau)(D_v^{2k-\ell+1}\Theta)(\tau+\tfrac 12;2\tau)\\
=& q^{-1/4}\sum\limits_{m=0}^{k}\sum\limits_{\ell=0}^{k-m} \left[\frac 12 (D_u^{2\ell} R)(-\tau-\tfrac 12;2\tau)+\frac{2(k-\ell-m)+1}{2\ell+1}(D_u^{2\ell+1}R)(-\tau-\tfrac 12
;2\tau)\right]\\
 & \qquad\qquad\qquad \times b_{k,\ell,m}\left(\frac 12\right)^{2(k-\ell-m)}(D_\tau ^m\vartheta)(\tau)
\end{align*}
and
\begin{align*}
 & \sum\limits_{\ell=0}^{2k+1} (-1)^\ell {{2k+1} \choose \ell} (D_u^\ell R)(-\tau-1;2\tau)(D_v^{2k-\ell+1}\Theta)(\tau+1;2\tau)\\
=& -iq^{-1/4}\sum\limits_{m=0}^{k}\sum\limits_{\ell=0}^{k-m} \left[\frac 12 (D_u^{2\ell} R)(-\tau-1;2\tau)+\frac{2(k-\ell-m)+1}{2\ell+1}(D_u^{2\ell+1}R)(-\tau-;,2\tau)\right]\\
 &\qquad\qquad\qquad \times b_{k,\ell,m}\left(\frac 12\right)^{2(k-\ell-m)}(D_\tau ^m\vartheta)(\tau+\tfrac 12),
\end{align*}
where
\[b_{k,\ell,m}:=\frac{(2k+1)!}{(2\ell)!(2m)!(2(k-\ell-m)+1)!}={{2k+1} \choose {2\ell,\,2m,\,2(k-\ell-m)+1}}.\]
\end{lemma}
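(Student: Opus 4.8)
The plan is to prove the first identity directly by substituting the formula of \Cref{diffTheta} for the theta-derivatives, interchanging the order of summation, and reducing the whole claim to two elementary factorial identities; the second identity then follows by the very same manipulation, now using the companion formula for $(D_v^r\Theta)(\tau+1;2\tau)$ in \Cref{diffTheta}, whose extra factor of $i$ and argument shift from $\vartheta(\tau)$ to $\vartheta(\tau+\tfrac12)$ produce exactly the overall $-i$ and the $(D_\tau^m\vartheta)(\tau+\tfrac12)$ on the right-hand side. So I would only treat the first identity and remark that the second is verbatim.

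First I would insert \eqref{eqdiffTheta} with $r=2k-\ell+1$ into the left-hand side. This turns the single sum over $\ell$ into a double sum over $\ell$ and the theta-derivative index $s$, producing an overall prefactor $-q^{-1/4}$ together with coefficients $\binom{2k+1}{\ell}\binom{2k-\ell+1}{2s}\bigl(-\tfrac12\bigr)^{2k-\ell+1-2s}$ multiplying $(D_u^\ell R)(-\tau-\tfrac12;2\tau)\,(D_\tau^s\vartheta)(\tau)$. Next I would interchange the two summations and collect, for each fixed $m:=s$, the total coefficient of $(D_\tau^m\vartheta)(\tau)$. The constraint $s\le\lfloor(2k-\ell+1)/2\rfloor$ becomes $\ell\le 2(k-m)+1$, so for fixed $m$ the index $\ell$ runs over $0,\dots,2(k-m)+1$. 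Using $(-1)^\ell\bigl(-\tfrac12\bigr)^{2(k-m)+1-\ell}=-\bigl(\tfrac12\bigr)^{2(k-m)+1-\ell}$, the signs collapse and the prefactor flips to $+q^{-1/4}$, so the coefficient of $(D_\tau^m\vartheta)(\tau)$ becomes $q^{-1/4}\sum_{\ell=0}^{2(k-m)+1}\binom{2k+1}{\ell}\binom{2k-\ell+1}{2m}\bigl(\tfrac12\bigr)^{2(k-m)+1-\ell}(D_u^\ell R)(-\tau-\tfrac12;2\tau)$.

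Then I would split this sum according to the parity of $\ell$, writing $\ell=2\ell'$ and $\ell=2\ell'+1$ with $\ell'$ running from $0$ to $k-m$ in both cases; this is precisely the step that separates the two $R$-derivative terms appearing on the right. Matching coefficients reduces the claim to the two factorial identities $\binom{2k+1}{2\ell'}\binom{2k-2\ell'+1}{2m}=b_{k,\ell',m}$ and $\binom{2k+1}{2\ell'+1}\binom{2(k-\ell')}{2m}=\tfrac{2(k-\ell'-m)+1}{2\ell'+1}\,b_{k,\ell',m}$, each of which follows at once by cancelling the common factorial $(2k+1-2\ell')!$ (respectively $(2k-2\ell')!$) and rewriting the surviving quotient. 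Comparing the exponents of $\tfrac12$ closes the check: the even terms carry $2(k-m-\ell')+1$, matching the explicit $\tfrac12\cdot\bigl(\tfrac12\bigr)^{2(k-\ell'-m)}$, while the odd terms carry $2(k-m-\ell')$, matching $\bigl(\tfrac12\bigr)^{2(k-\ell'-m)}$.

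The computation is otherwise routine, so the main obstacle is purely bookkeeping. One must keep the index ranges and the interplay between $(-1)^\ell$ and the powers of $-\tfrac12$ under control, and in particular recognise the even/odd reorganisation as the mechanism that simultaneously produces the factor $\tfrac12$ in front of the even derivatives $D_u^{2\ell}R$ and the rational weight $\tfrac{2(k-\ell-m)+1}{2\ell+1}$ in front of the odd derivatives $D_u^{2\ell+1}R$. Once that split is in place, nothing deeper than factorial manipulation remains.
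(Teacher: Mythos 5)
Your proposal is correct and follows essentially the same route as the paper's proof: both reduce the claim to substituting \eqref{eqdiffTheta}, splitting the $\ell$-sum by parity, and verifying the two multinomial identities $\binom{2k+1}{2\ell}\binom{2(k-\ell)+1}{2m}=b_{k,\ell,m}$ and $\binom{2k+1}{2\ell+1}\binom{2(k-\ell)}{2m}=\tfrac{2(k-\ell-m)+1}{2\ell+1}b_{k,\ell,m}$, differing only in the order of the parity split and the interchange of summations. The sign bookkeeping and the exponents of $\tfrac12$ check out exactly as you describe.
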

\begin{proof} 
Again, we only show the former equation, the latter follows from the transformation laws. For simplicity, we omit the arguments of the functions considered.

We obtain
\begin{align*}
 & \sum\limits_{\ell=0}^{2k+1} (-1)^\ell {{2k+1} \choose \ell} (D_u^\ell R)(D_v^{2k-\ell+1}\Theta)\\
=& \sum\limits_{\ell=0}^{k} {{2k+1} \choose {2\ell}} (D_u^{2\ell} R)(D_v^{2(k-\ell)+1}\Theta)-\sum\limits_{\ell=0}^{k} {{2k+1} \choose {2\ell+1}} (D_u^{2\ell+1} R)(D_v^{2(k-\ell)}\Theta)\\
\overset{\eqref{eqdiffTheta}}{=} & -q^{-1/4}\left[\sum\limits_{\ell=0}^{k}\sum\limits_{m=0}^{k-\ell} \underbrace{{{2k+1} \choose {2\ell}}{{2(k-\ell)+1} \choose {2m}}}_{=b_{k,\ell,m}}\left(-\frac 12\right)^{2(k-\ell-m)+1} (D_u^{2\ell} R)(D_\tau ^m\vartheta)\right.\\
& \qquad\qquad \left. -\sum\limits_{\ell=0}^{k}\sum\limits_{m=0}^{k-\ell} \underbrace{{{2k+1} \choose {2\ell+1}}{{2(k-\ell)} \choose {2m}}}_{=\frac{2(k-\ell-m)+1}{2\ell+1}b_{k,\ell,m}}\left(-\frac 12\right)^{2(k-\ell-m)} (D_u^{2\ell+1} R)(D_\tau ^m\vartheta)\right]\\
=&q^{-1/4}\sum\limits_{\ell=0}^{k}\sum\limits_{m=0}^{k-\ell} \left[\frac 12 (D_u^{2\ell} R)+\frac{2(k-\ell-m)+1}{2\ell+1}(D_u^{2\ell+1}R)\right]b_{k,\ell,m}\left(\frac 12\right)^{2(k-\ell-m)}(D_\tau ^m\vartheta).
\end{align*} 

Interchanging the sums gives the desired result.
\end{proof}
\begin{corollary}\label{corFin}
\Cref{conjCohen} is true if the identity
\begin{equation}\label{eqfin1}
\begin{aligned}
\left(D_\tau^m\calR\right)(\tau)=&-\frac i4 q^{-1/4} (-1)^m\sum\limits_{\ell=0}^m \left[\frac 12 (D_u^{2\ell}R)(-\tau-\tfrac 12;2\tau)\right.\\
                    &\qquad \left.+\frac{2(m-\ell)+1}{2\ell+1}(D_u^{2\ell+1}R)(-\tau-\tfrac 12;2\tau)\right]\cdot {{2m+1}\choose {2\ell}}\left(\frac{1}{4}\right)^{m-\ell}
\end{aligned}
\end{equation}
holds true for all $m\in\N_0$ and the function in \eqref{eqCohen3} is modular on $\Gamma_0(4)$.
\end{corollary}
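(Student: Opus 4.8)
The plan is to take both hypotheses at face value---the identity \eqref{eqfin1} for every $m\in\N_0$ and the modularity of the function in \eqref{eqCohen3} on $\Gamma_0(4)$---and to deduce the conjecture from the reduction already set up in the remark following \Cref{lemAppell}. By \Cref{conjMod} the coefficient of $X^{2k+1}$ in \eqref{eqCohen} vanishes identically, so those coefficients are trivially modular, while the coefficient of $X^{2k}$ equals \eqref{eqCohen2}. Writing $\calH=\G-\calR$ and splitting $\widehat A_1^{odd}$ into its holomorphic part and its completion term, bilinearity of the Rankin--Cohen bracket shows that \eqref{eqCohen2} is obtained from the function \eqref{eqCohen3} by subtracting the two non-holomorphic contributions \eqref{nonhol1} and \eqref{nonhol2}. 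Since \eqref{eqCohen3} transforms like a form of weight $2k+2$ and is modular on $\Gamma_0(4)$ by assumption, everything reduces to proving that \eqref{nonhol1} and \eqref{nonhol2} cancel.

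This cancellation is exactly where \eqref{eqfin1} enters. First I would expand \eqref{nonhol1} by the definition of the Rankin--Cohen bracket, with weights $\tfrac32$ for $\calR$ and $\tfrac12$ for $\vartheta$, obtaining a linear combination of products $(D_\tau^r\calR)(D_\tau^{k-r}\vartheta)$ evaluated at $\tau$ and at $\tau+\tfrac12$. On the other side, \Cref{lemRTheta} presents \eqref{nonhol2} as a double sum; using the factorization $b_{k,\ell,m}=\binom{2k+1}{2m}\binom{2(k-m)+1}{2\ell}$ one sees that, for each fixed order $m$ of $D_\tau^m\vartheta$, the inner sum over $\ell$ is precisely the right-hand side of \eqref{eqfin1} with $m$ replaced by $k-m$. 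Substituting \eqref{eqfin1} therefore collapses each inner sum to the single term $D_\tau^{k-m}\calR$, so that \eqref{nonhol2} turns back into a Rankin--Cohen-type expansion in $\calR$ and $\vartheta$.

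It then remains to compare coefficients. Matching the coefficient of $(D_\tau^r\calR)(D_\tau^{k-r}\vartheta)$ reduces to the elementary identity $\tfrac{c_k}{2}\binom{k+1/2}{k-r}\binom{k-1/2}{r}=\tfrac12\binom{2k+1}{2r+1}$, which follows from the Legendre duplication formula for $\Gamma$. The two torsion points are handled in parallel: the terms at $(-\tau-\tfrac12;2\tau)$ reconstruct $[\calR,\vartheta]_k(\tau)$, while those at $(-\tau-1;2\tau)$ reconstruct $[\calR,\vartheta]_k(\tau+\tfrac12)$ once \eqref{eqfin1} is transported to the second torsion point via $R(u;\tau+1)=e^{-\pi i/4}R(u;\tau)$, as in the proof of \Cref{Rid}. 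With both contributions in hand the non-holomorphic parts cancel, so \eqref{eqCohen2} coincides with \eqref{eqCohen3}. The latter is modular of weight $2k+2$ on $\Gamma_0(4)$ and, being equal to the holomorphic $q$-series \eqref{eqCohen2} whose coefficients grow polynomially, is holomorphic at every cusp; hence it is a genuine holomorphic modular form of weight $2k+2$. Together with the vanishing of the odd coefficients this establishes \Cref{conjCohen}.

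The main obstacle is the combinatorial bookkeeping in the last two steps: one must reconcile four different families of constants---the half-integral Rankin--Cohen binomials, the multinomials $b_{k,\ell,m}$, the binomials in \eqref{eqfin1}, and the normalization $c_k$---and keep careful track of the signs and scalars, in particular the factors of $i$ and the powers of $q^{1/4}$ introduced by the $\tau+\tfrac12$ shift needed for the second torsion point. A useful internal consistency check is that the holomorphic parts must already agree, which pins down the precise scalar multiplying the completed Appell--Lerch term. Getting all these constants to line up, rather than any conceptual difficulty, is the crux, and the duplication-formula identity above is the key arithmetic lemma that makes the two expansions match term by term.
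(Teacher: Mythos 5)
Your proposal is correct and follows essentially the same route as the paper: reduce via the remark to the cancellation of \eqref{nonhol1} and \eqref{nonhol2}, expand the former as a Rankin--Cohen bracket and the latter via \Cref{lemRTheta}, and match the coefficients of $D_\tau^m\vartheta$ using the factorization $b_{k,\ell,m}=\binom{2k+1}{2m}\binom{2(k-m)+1}{2\ell}$ together with the Legendre-duplication binomial identity (your $\tfrac{c_k}{2}\binom{k+1/2}{k-r}\binom{k-1/2}{r}=\tfrac12\binom{2k+1}{2r+1}$ is exactly the paper's ratio computation in disguise). The only cosmetic difference is that you argue forward from \eqref{eqfin1} while the paper states the intermediate condition \eqref{idcalR1} and simplifies it, and you are slightly more explicit about transporting the identity to the second torsion point via $R(u;\tau+1)=e^{-\pi i/4}R(u;\tau)$.
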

\begin{proof}
\Cref{lemRTheta} gives us that \Cref{conjCohen} holds true if the identity
\begin{equation}
\begin{aligned}
\label{idcalR1} & c_k (-1)^{k-m} {{k+\tfrac 12} \choose m}{{k-\tfrac 12} \choose {k-m}} D^{k-m}_\tau\calR(\tau)\\
=& -\frac i4 q^{-1/4}\sum\limits_{\ell=0}^{k-m} \left[\frac 12 (D_u^{2\ell} R)(-\tau-\tfrac 12;2\tau)+\frac{2(k-\ell-m)+1}{2\ell+1}(D_u^{2\ell+1}R)(-\tau-\tfrac 12;2\tau)\right]\\
 &\qquad\qquad \times b_{k,\ell,m}\left(\frac 12\right)^{2(k-\ell-m)}
 \end{aligned}
 \end{equation}
does as well. 

We can simpify this a little further: We have
\begin{align*}
c_k{{k+\tfrac 12} \choose m}{{k-\tfrac 12} \choose {k-m}}={k \choose m} \frac{\sqrt\pi \Gamma(k+\tfrac 32)}{\Gamma(k-m+\tfrac 32)\Gamma(m+\tfrac 12)}
\end{align*}
and using Legendre's duplication formula for the Gamma function we obtain after a little calculation that
\begin{align*}
\frac{\left(\frac{1}{2}\right)^{2(k-\ell-m)}b_{k,\ell,m}}{c_k{{k+\tfrac 12} \choose m}{{k-\tfrac 12} \choose {k-m}}}= {{2(k-m)+1}\choose {2\ell}}\left(\frac{1}{4}\right)^{k-\ell-m}.
\end{align*}
and hence the corollary. 
\end{proof}
\begin{remark}\label{remLevel}
Since 
\[\begin{pmatrix} 1 & -\frac 12 \\ 0 & 1 \end{pmatrix}\Gamma_0(4)\begin{pmatrix} 1 & \frac 12 \\ 0 & 1 \end{pmatrix}=\Gamma_0(4)\]
we see that for any (not necessarily holomorphic) modular form $f$ of even weight $k$ on $\Gamma_0(4)$, the function $g=f|_k\begin{pmatrix} 1 & \frac 12 \\ 0 & 1 \end{pmatrix}$ is a modular form of the same weight on $\Gamma_0(4)$ as well. In particular, this applies to $[\G,\vartheta]_k$ for all $k\in\N_0$. 
\end{remark}
\begin{lemma}\label{lemLevel}
The second summand in \eqref{eqCohen3} transforms like a modular form on $\Gamma_0(4)$.
\end{lemma}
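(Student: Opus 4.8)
The plan is to read off the transformation behaviour of the second summand
\[
F(\tau):=\bigl(D_v^{2k+1}\widehat{A}_1^{odd}\bigr)\bigl(0,\tau+\tfrac12;2\tau\bigr)
\]
directly from the Jacobi transformation law (M) of $\widehat{A}_1$, the key point being that conjugation by $\left(\begin{smallmatrix}1&0\\0&2\end{smallmatrix}\right)$ turns the action of $\Gamma_0(4)$ on $\tau$ into an honest $\SL_2(\Z)$-action on $2\tau$. Concretely, for $\gamma=\left(\begin{smallmatrix}a&b\\c&d\end{smallmatrix}\right)\in\Gamma_0(4)$ one has $2\gamma\tau=\gamma'(2\tau)$ with $\gamma'=\left(\begin{smallmatrix}a&2b\\c/2&d\end{smallmatrix}\right)\in\SL_2(\Z)$, and the automorphy factor $(c/2)(2\tau)+d$ of $\gamma'$ at $2\tau$ equals $c\tau+d$. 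I would first record this, writing $N:=2k+1$ and $j:=c\tau+d$, and note at the outset that $F$ is well defined: the simple poles of $A_1(u,v;\cdot)$ at $u=0$ cancel in the odd combination (the same cancellation already observed in \Cref{lemAppell}), so $D_v^{N}\widehat{A}_1^{odd}$ may legitimately be evaluated at $u=0$.

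Next I would feed $\gamma'$ into (M) and rescale the elliptic variables, obtaining
\[
\widehat{A}_1(u,v;2\gamma\tau)=j\,e^{\pi i (c/2)\,j\,(-u^2+2uv)}\,\widehat{A}_1(ju,jv;2\tau).
\]
Applying $D_v^{N}$ by the product rule, the $v$-derivatives of the exponential produce factors proportional to $u$ and hence vanish once we set $u=0$; only the term differentiating $\widehat{A}_1(ju,jv;2\tau)$ survives, and the chain rule contributes a factor $j^{N}$. Together with the prefactor $j$ this yields
\[
\bigl(D_v^{N}\widehat{A}_1\bigr)(0,v;2\gamma\tau)=j^{\,N+1}\bigl(D_v^{N}\widehat{A}_1\bigr)(0,jv;2\tau).
\]
Specialising $v=\gamma\tau+\tfrac12$ gives $jv=(a+\tfrac c2)\tau+(b+\tfrac d2)$; since $4\mid c$ and $a,d$ are odd, this point is congruent to $\tau+\tfrac12$ modulo the lattice $\Z(2\tau)+\Z$, and the elliptic law (E), applied with $\lambda_1=\mu_1=0$ so that the resulting factor $a^{-\lambda}=e^{-2\pi i\lambda u}$ is trivial at $u=0$, removes the shift. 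The identical computation with $v=\gamma\tau+1$ reduces $jv$ to $\tau+1$. Assembling the odd combination $\widehat{A}_1^{odd}=\tfrac12\bigl(\widehat{A}_1(u,v)-\widehat{A}_1(u,v+\tfrac12)\bigr)$ then gives exactly $F(\gamma\tau)=(c\tau+d)^{2k+2}F(\tau)$, i.e. $F|_{2k+2}\gamma=F$.

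The step I expect to require the most care is the arithmetic bookkeeping that singles out $\Gamma_0(4)$: reducing $jv$ to the base point $\tau+\tfrac12$ via (E) requires $\tfrac{a+c/2-1}{2}\in\Z$, which forces $c/2$ to be even, i.e. $c\equiv0\pmod4$ — for $c\equiv2\pmod4$ the oddness of $a$ forced by $\det\gamma=1$ makes this fail — so the claimed level is genuinely $\Gamma_0(4)$ and not merely $\Gamma_0(2)$. Everything else (the product rule, the chain rule, and the explicit evaluation of the $(E)$-factor at $u=0$) is routine once the congruence conditions on the entries of $\gamma$ have been checked.
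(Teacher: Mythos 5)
Your proof is correct, and it takes a more hands-on route than the paper. Both arguments ultimately rest on the same two facts --- that $\widehat{A}_1$ transforms like a Jacobi form and that conjugation by $\left(\begin{smallmatrix}2&0\\0&1\end{smallmatrix}\right)$ carries $\Gamma_0(4)$ acting on $\tau$ into an $\SL_2(\Z)$-action on $2\tau$ (your $\gamma'=\left(\begin{smallmatrix}a&2b\\c/2&d\end{smallmatrix}\right)$ is exactly the conjugated matrix appearing in the paper's inclusion $\left(\begin{smallmatrix}2&0\\0&1\end{smallmatrix}\right)\Gamma_0(4)\left(\begin{smallmatrix}1/2&0\\0&1\end{smallmatrix}\right)\leq\Gamma$) --- but where the paper invokes \cite[Theorem 1.3]{EichZag} as a black box to conclude that the specialization of a Jacobi form of index $0$ at the torsion point $\tfrac{\tau}{2}+\tfrac12$ is modular on the group $\Gamma$, you carry out the torsion-point specialization by hand: applying (M) for $\gamma'$, killing the index factor and the derivatives of the exponential at $u=0$, and then using (E) with $\lambda_1=\mu_1=0$ to return $jv$ to the base points $\tau+\tfrac12$ and $\tau+1$. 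What your version buys is that the level is visible rather than inherited from the citation: the requirement $\tfrac{a+c/2-1}{2}\in\Z$ with $a$ odd forces $4\mid c$ explicitly, whereas in the paper this is buried in the verification of the conjugation inclusion. Two small points of care: your displayed identity $(D_v^{N}\widehat{A}_1)(0,v;2\gamma\tau)=j^{N+1}(D_v^{N}\widehat{A}_1)(0,jv;2\tau)$ is literally ill-defined for $\widehat{A}_1$ alone because of the pole at $u=0$; since you flag the cancellation in the odd combination at the outset, this is only a matter of phrasing (state the identity for $\widehat{A}_1^{odd}$, or work at generic $u$ and set $u=0$ after forming the odd combination). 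Also note that the factor $a^{-\lambda_2}=e^{-2\pi i\lambda_2 u}$ produced by (E) is independent of $v$, so it indeed commutes with $D_v^{N}$ before being evaluated to $1$ at $u=0$, which is the step that makes your reduction of the shifted point legitimate after differentiation.
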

\begin{proof}
Looking at the $(2\ell+1)$-st derivative of $\widehat{A}_1^{odd}(0,v;\tau)$ with respect to $v$, one immediately sees that this has the modular transformation properties of a Jacobi form of weight $2\ell+2$ and index $0$ on $\SL_2(\Z)$. By \cite[Theorem 1.3]{EichZag} it follows that ${\mathcal{A}(\tau):=(D_v^{2\ell+1}\widehat{A}_1^{odd})(0,\tfrac{\tau}{2}+\tfrac 12;\tau)}$ transforms (up to some power of $q$) like a modular form of weight $\ell+1$ on the group
\[\Gamma:=\left\lbrace\gamma\in\SL_2(\Z)\,|\,\frac{a-1}{2}+\frac{c}{2}\in\Z\text{ and }\frac{b}{2}+\frac{d-1}{2}\in\Z\right\rbrace.\]
We are interested in $(D_v^{2\ell+1}\widehat{A}_1^{odd})(0,\tau+\tfrac 12;\tau)=\frac{1}{2^{\ell+1}}\mathcal{A}|_{2\ell+2}\begin{pmatrix} 2 & 0 \\ 0 & 1 \end{pmatrix}$ and since one easily checks that
\[\begin{pmatrix} 2 & 0 \\ 0 & 1 \end{pmatrix}\Gamma_0(4)\begin{pmatrix} \frac 12 & 0 \\ 0 & 1 \end{pmatrix}\leq \Gamma,\]
the assertion follows.
\end{proof}
\section{A Proof of Cohen's Conjecture}\label{secProof}
We now prove \Cref{main} using \Cref{corFin}. The proof is an induction on $m$. Since the base case $m=0$ gives an alternative proof of the class number relation \eqref{Eic} by Eichler, we give this as a proof of an additional theorem.
\begin{theorem}(M. Eichler, 1955, \cite{Eichler55})\\
For odd numbers $n\in\N$ we have the class number relation 
\[\sum\limits_{s\in\Z}H(n-s^2)+\lambda_1(n)=\frac 13\sigma_1(n).\]
\end{theorem}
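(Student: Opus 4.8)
The plan is to run the argument of \Cref{corFin} in the single case $m=0$. By \Cref{corFin}, \Cref{conjCohen} follows from the identity \eqref{eqfin1} for all $m$; the base case $m=0$ isolates precisely the coefficient of $X^0$ in \eqref{eqCohen} (the $k=0$ coefficient in the notation of \Cref{conjMod}), whose modularity turns out to be Eichler's relation in disguise. The modularity ingredient required by \Cref{corFin} is already in hand: the first summand of the $k=0$ instance of \eqref{eqCohen3} is modular on $\Gamma_0(4)$ by \Cref{remLevel}, and the second by \Cref{lemLevel}. So the only genuine computation is the base case of \eqref{eqfin1}.

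First I would specialize \eqref{eqfin1} to $m=0$. The inner sum then collapses to its single term $\ell=0$, and the claimed identity reads
\[\calR(\tau) = -\frac i4 q^{-1/4}\left(\tfrac12\, R(-\tau-\tfrac12;2\tau) + (D_u R)(-\tau-\tfrac12;2\tau)\right).\]
Substituting the explicit evaluations \eqref{R1} and \eqref{DR1} from \Cref{Rid}, the two elementary contributions $\tfrac i2 q^{1/4}$ (one from $\tfrac12 R$, one from the tail of \eqref{DR1}) cancel, and what survives is
\[-\frac i4 q^{-1/4}\cdot\frac{-1+i}{4\pi}q^{1/4}\int_{-\overline\tau}^{i\infty}\frac{\vartheta(z)}{(z+\tau)^{3/2}}\,\d z = \frac{1+i}{16\pi}\int_{-\overline\tau}^{i\infty}\frac{\vartheta(z)}{(z+\tau)^{3/2}}\,\d z,\]
which is precisely $\calR(\tau)$ as defined in \Cref{ModcalH}. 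This short sign-cancellation is the technical heart of the base case.

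With \eqref{eqfin1} established at $m=0$, \Cref{corFin} together with the $k=0$ instance of \Cref{conjMod} (where $c_0=1$ and $[\calH,\vartheta]_0=\calH\vartheta$) shows that
\[F(\tau) := \tfrac12\bigl(\calH\vartheta(\tau)-\calH\vartheta(\tau+\tfrac12)\bigr)+\Lambda_{1,odd}(\tau) = \sum_{\substack{n\geq 1\\ n\text{ odd}}}\Bigl(\sum_{s\in\Z}H(n-s^2)+\lambda_1(n)\Bigr)q^n\]
is a holomorphic modular form of weight $2$ on $\Gamma_0(4)$: the non-holomorphic parts \eqref{nonhol1} and \eqref{nonhol2} cancel by the identity just proven, and the polynomial growth of the Fourier coefficients yields holomorphy at every cusp.

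It then remains to identify $F$. Since $\dim S_2(\Gamma_0(4))=0$, the two-dimensional space $M_2(\Gamma_0(4))$ is spanned by Eisenstein series, and the odd-support form $\sum_{n\text{ odd}}\sigma_1(n)q^n = \tfrac{1}{24}\bigl(-E_2(\tau)+3E_2(2\tau)-2E_2(4\tau)\bigr)$ lies in it. By the Sturm bound for weight $2$ on $\Gamma_0(4)$ it suffices to match the constant term and the coefficient of $q^1$; the former vanishes on both sides, while for the latter one computes $H(1)+2H(0)+\lambda_1(1)=-\tfrac16+\tfrac12=\tfrac13=\tfrac13\sigma_1(1)$. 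Hence $F=\tfrac13\sum_{n\text{ odd}}\sigma_1(n)q^n$, and reading off the coefficient of $q^n$ for odd $n$ gives Eichler's relation. I expect the only points requiring real care to be the sign-bookkeeping in the $m=0$ verification and the explicit identification of the Eisenstein component, in particular pinning down the normalizing factor $\tfrac13$.
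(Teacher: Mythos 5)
Your proposal is correct and follows essentially the same route as the paper: specialize \eqref{eqfin1} to $m=0$, verify it via \Cref{Rid}, conclude from \Cref{corFin}, \Cref{remLevel} and \Cref{lemLevel} that the $X^0$-coefficient is a holomorphic weight $2$ form on $\Gamma_0(4)$, and identify it with $\tfrac13\sum_{n\text{ odd}}\sigma_1(n)q^n$ by matching the first two Fourier coefficients. The only cosmetic differences are that you write out the sign cancellation in the $m=0$ case explicitly and invoke the Sturm bound where the paper appeals directly to the two-dimensionality of the space of weight $2$ forms on $\Gamma_0(4)$.
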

\begin{proof}
Let
\[F_2(\tau)=\sum\limits_{n=0}^\infty \sigma_1(2n+1)q^{2n+1}.\]
We recall that this function is a modular form of weight $2$ on $\Gamma_0(4)$ (cf. eg. \cite[Proposition 1.1]{Coh75}). 

Plugging in $m=0$ into \eqref{eqfin1} gives us the equation
\[\calR(\tau)=-\frac i4 q^{-1/4} \left[\frac 12 R(-\tau-\tfrac 12;2\tau)+(D_uR)(-\tau-\tfrac 12;2\tau)\right].\]
This equality holds true by \Cref{Rid}. Hence we know by \Cref{corFin}, \Cref{lemAppell}, \Cref{remLevel} and \Cref{lemLevel} that 
\[\frac 12\left(\calH(\tau)\vartheta(\tau)-\calH\left(\tau+\tfrac 12\right)\vartheta\left(\tau+\tfrac 12\right)\right)+\Lambda_{1,odd}(\tau)\]
is indeed a holomorphic modular form of weight $2$ on $\Gamma_0(4)$ as well.

Since the space of modular forms of weight $2$ on $\Gamma_0(4)$ is $2$-dimensional, the assertion follows by comparing the first two Fourier coefficients of the function above and $\tfrac 13 F_2(\tau)$.
\end{proof}
The proof of this given in \cite{Eichler55} involves topological arguments about the action of Hecke operators on the Riemann surface associated to $\Gamma_0(2)$ on the one hand and arithmetic of quaternion orders on the other.

Note that the knowledge of Eichler's class number relation does not necessarily imply that the non-holomorphic parts of our considered mixed mock modular forms cancel. 

\begin{proof}[Proof of \Cref{main}]
The base case of our induction is treated above, thus suppose that \eqref{eqfin1} holds true for one $m\in\N_0$.

For simplicity, we omit again the argument $\left(-\tau-\tfrac 12;2\tau\right)$ in the occuring $R$ derivatives.

By the induction hypothesis we see that
{\allowdisplaybreaks
\begin{align*}
 & D_\tau^{m+1}\calR(\tau)=D_\tau(D_\tau^m\calR(\tau))\\
=& -\frac i4 q^{-1/4} (-1)^m \left\lbrace -\frac 14 \sum\limits_{\ell=0}^m \left[\frac 12 (D_u^{2\ell}R)+\frac{2(m-\ell)+1}{2\ell+1}(D_u^{2\ell+1}R)\right]\cdot {{2m+1}\choose {2\ell}}\left(\frac{1}{4}\right)^{m-\ell} \right.\\
 & \qquad +\left. \sum\limits_{\ell=0}^m \left[\frac 12 (D_\tau D_u^{2\ell}R)+\frac{2(m-\ell)+1}{2\ell+1}(D_\tau D_u^{2\ell+1}R)\right]\cdot {{2m+1}\choose {2\ell}}\left(\frac{1}{4}\right)^{m-\ell}\right\rbrace .\\
\end{align*}
}
By the Theorem of Schwarz, the partial derivatives interchange and thus the total differential $D_\tau$ is given by
\[D_\tau(D_u^\ell R(-\tau-\tfrac 12;2\tau))=-(D_u^{\ell+1}R)(-\tau-\tfrac 12;2\tau)+2(D_\tau D_u^\ell R)(-\tau-\tfrac 12;2\tau).\]
Now \Cref{Heat} implies that the above equals
{\allowdisplaybreaks
\begin{align*}
 & -\frac i4 q^{-1/4} (-1)^{m+1} \left\lbrace \sum\limits_{\ell=0}^m \left[\frac 18 (D_u^{2\ell}R)+\left(\frac 14 \frac{2(m-\ell)+1}{2\ell+1}+\frac 12\right)(D_u^{2\ell+1}R)\right.\right.\\
 & \qquad\qquad \left.\left. +\left(\frac 12+\frac{2(m-\ell)+1}{2\ell+1}\right)(D_u^{2\ell+2}R) +\frac{2(m-\ell)+1}{2\ell+1}(D_u^{2\ell+3}R)\right]\cdot {{2m+1}\choose {2\ell}}\left(\frac{1}{4}\right)^{m-\ell} \right\rbrace \\
=& -\frac i4 q^{-1/4} (-1)^{m+1} \left\lbrace \left[\frac{1}{2}R +(2m+3)(D_uR)\right]\left(\frac 14\right)^{m+1} \right.\\
 & +\sum\limits_{\ell=1}^{m} {{2m+1}\choose {2\ell}}\left(\frac{1}{4}\right)^{m-\ell+1}\left[\left(\frac 12+\frac{2(2m-\ell)+5}{4\ell-2}\cdot\frac{(2\ell-1)(2\ell)}{(2(m-\ell)+2)(2(m-\ell)+3)}\right)(D_u^{2\ell}R)\right.\\
 & \qquad \quad \left.+\left(\frac{2(m+\ell)+3}{2\ell+1}+\frac{2(m-\ell)+3}{2\ell-1}\cdot\frac{(2\ell-1)(2\ell)}{(2(m-\ell)+2)(2(m-\ell)+3)}\right)(D_u^{2\ell+1}R)\right] \\
 &  + \left.\left[\frac{2m+3}{4m+2}(D_u^{2m+2}R)+\frac{1}{2m+1}(D_u^{2m+3}R)\right]{{2m+1} \choose {2m}}\right\rbrace.
\end{align*}
}
It is easily seen that the last summand equals
\[\left[\frac 12(D_u^{2m+2}R)+\frac{1}{2m+3}(D_u^{2m+3}R)\right]\cdot (2m+3)\]
and a direct but rather tedious calculation gives that
\begin{align*}
 &{{2m+1}\choose {2\ell}}\cdot\left(\frac 12+\frac{2(2m-\ell)+5}{4\ell-2}\cdot\frac{(2\ell-1)(2\ell)}{(2(m-\ell)+2)(2(m-\ell)+3)}\right)=\frac 12 {{2m+3}\choose {2\ell}}
\end{align*}
and 
\begin{align*}
 &{{2m+1}\choose {2\ell}}\cdot\left(\frac{2(m+\ell)+3}{2\ell+1}+\frac{2(m-\ell)+3}{2\ell-1}\cdot\frac{(2\ell-1)(2\ell)}{(2(m-\ell)+2)(2(m-\ell)+3)}\right)\\
 = &\frac{2(m-\ell)+3}{2\ell+1} {{2m+3}\choose {2\ell}}.
\end{align*}
In summary, we therefore get
\begin{align*}
D_\tau^{m+1}\calR(\tau)=&-\frac i4 q^{-1/4} (-1)^{m+1}\sum\limits_{\ell=0}^{m+1} \left[\frac 12 (D_u^{2\ell}R)(-\tau-\tfrac 12;2\tau)\right.\\
                        &\qquad \left. +\frac{2(m-\ell)+3}{2\ell+1}(D_u^{2\ell+1}R)(-\tau-\tfrac 12;2\tau)\right]\cdot {{2m+3}\choose {2\ell}}\left(\frac{1}{4}\right)^{m-\ell+1}
\end{align*}
which proves \Cref{conjCohen}.

The fact that we actually get a cusp form can be seen in the following way: 

By \cite[Corollary 7.2]{Coh75} we see that the function $\tau\mapsto\frac{c_k}{2}\left([\calH,\vartheta]_k(\tau)-[\calH,\vartheta]_k(\tau+\tfrac 12)\right)$ is a non-holomorphic cusp form if $k\geq 1$. We use the same argument as there to see that $(D_u^{2\ell+1}\widehat{A}_1^{odd})(0,\tau+\tfrac 12;2\tau)$ is a cusp form as well. Because we know by \Cref{lemLevel} that we have for $\gamma\in\SL_2(\R)$ that
\[(D_v^{2\ell+1}\widehat{A}_1^{odd})|_{2\ell+2}\gamma=D_v^{2\ell+1}(\widehat{A}_1^{odd}|_{1}\gamma),\]
and by definition $(D_v^{2\ell+1}\widehat{A}_1^{odd})(0,\tau+\tfrac 12; 2\tau)$ vanishes at the cusp $i\infty$ for all $\ell\in\N_0$. So by the above equation it vanishes at every cusp of $\Gamma_0(4)$. 
\end{proof}
\begin{corollary}\label{corClass}
By comparing the first few Fourier coefficients of the modular forms in \Cref{main} one finds for all odd $n\in\N$ the following class number relations
\begin{align*}
\sum\limits_{s\in\Z} \left(4s^2-n\right)H\left(n-s^2\right)+\lambda_3\left(n\right)&=0,\\
\sum\limits_{s\in\Z} g_4(s,n)H\left(n-s^2\right)+\lambda_5\left(n\right)&=-\frac{1}{12}\sum\limits_{n=x^2+y^2+z^2+t^2}\mathcal{Y}_4(x,y,z,t),\\
\sum\limits_{s\in\Z} g_6(s,n)H\left(n-s^2\right)+\lambda_7\left(n\right)&=-\frac 13\sum\limits_{n=x^2+y^2+z^2+t^2} \mathcal{Y}_6(x,y,z,t),\\
\sum\limits_{s\in\Z} g_8(s,n)H\left(n-s^2\right)+\lambda_9\left(n\right)&=-\frac{1}{70}\sum\limits_{n=x^2+y^2+z^2+t^2} \mathcal{Y}_8(x,y,z,t)
\end{align*}
where $g_\ell(n,s)$ is the $\ell$-th Taylor coefficient of $\left(1-sX+nX^2\right)^{-1}$ and $\mathcal{Y}_d(x,y,z,t)$ is a certain harmonic polynomial of degree $d$ in $4$ variables. Explicitly, we have 
\begin{align*}
g_4(s,n)&=\left(16s^4-12ns^2+n^2\right),\\
g_6(s,n)&=\left(64s^6-80s^4n+24s^2n^2-n^3\right),\\
g_8(s,n)&=\left(256s^8 - 448s^6n + 240s^4n^2 - 40s^2n^3 + n^4\right),
\end{align*}  
and
\begin{align*}
\mathcal{Y}_4(x,y,z,t)&=\left(x^4-6x^2y^2+y^4\right),\\
\mathcal{Y}_6(x,y,z,t)&= \left(x^6 - 5x^4y^2 - 10x^4z^2 + 30x^2y^2z^2  + 5x^2z^4 - 5y^2z^4\right),\\
\mathcal{Y}_8(x,y,z,t)&= \left(13 x^8 + 63 x^6 y^2 - 490 x^6 z^2 + 63 x^6 t^2 - 630 x^4 y^2 z^2  - 315 x^4 y^2 t^2 + 1435 x^4 z^4 \right.\\
&\quad\left. - 630 x^4 z^2 t^2  + 315 x^2 y^2 z^4 + 1890 x^2 y^2 z^2 t^2  - 616 x^2 z^6 + 315 x^2 z^4 t^2 -315 t^2 y^2 z^4  + 22 z^8     \right).\\
\end{align*}
The first two of the above relations were already mentioned in \cite{Coh75}.
\end{corollary}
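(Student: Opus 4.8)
The plan is to read off each relation as an identity between Fourier coefficients of two cusp forms of the same weight on $\Gamma_0(4)$ and then to settle it by finite-dimensionality. By \Cref{conjMod}, the coefficient of $X^{2k}$ in \eqref{eqCohen} is
\[
f_k(\tau):=\sum_{\substack{n\geq 0\\ n\text{ odd}}}\left(\sum_{s^2\leq n} g_{2k}(s,n)\,H(n-s^2)+\lambda_{2k+1}(n)\right)q^n,
\]
where $g_\ell(s,n)$ denotes the $X^\ell$-coefficient of $(1-2sX+nX^2)^{-1}$. These polynomials are generated by the recursion $g_\ell=2s\,g_{\ell-1}-n\,g_{\ell-2}$ with $g_0=1$ and $g_1=2s$, which immediately produces $g_2=4s^2-n$ and the displayed $g_4,g_6,g_8$. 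By \Cref{main}, $f_k$ is a cusp form of weight $2k+2$ on $\Gamma_0(4)$ for every $k\geq 1$, and by construction it is supported on odd powers of $q$.

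First I would realize the right-hand sides as modular objects. For a harmonic polynomial $\mathcal{Y}_d$ of degree $d$ in the four variables, the classical theory of theta series with spherical coefficients shows that
\[
\theta_{\mathcal{Y}_d}(\tau):=\sum_{(x,y,z,t)\in\Z^4}\mathcal{Y}_d(x,y,z,t)\,q^{x^2+y^2+z^2+t^2}
\]
is a cusp form of weight $2+d$ on $\Gamma_0(4)$. A direct check confirms that each of $\mathcal{Y}_4,\mathcal{Y}_6,\mathcal{Y}_8$ is annihilated by $\partial_x^2+\partial_y^2+\partial_z^2+\partial_t^2$, so with $d=2k$ the form $\theta_{\mathcal{Y}_{2k}}$ lands in weight $2k+2$. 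By \Cref{remLevel} its odd part $\tfrac12\bigl(\theta_{\mathcal{Y}_{2k}}(\tau)-\theta_{\mathcal{Y}_{2k}}(\tau+\tfrac12)\bigr)$ is again a cusp form of weight $2k+2$ on $\Gamma_0(4)$, supported on odd powers of $q$; its $n$-th coefficient (times the normalizing constant) is exactly the right-hand side of the asserted relation.

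It then remains to match $f_k$ with the appropriate multiple of the odd part of $\theta_{\mathcal{Y}_{2k}}$. Both lie in the finite-dimensional space $S_{2k+2}(\Gamma_0(4))$, whose dimensions are $\dim S_4=0$, $\dim S_6=1$, $\dim S_8=2$, and $\dim S_{10}=3$. For weight $2k+2=4$ the space is trivial, so $f_1\equiv 0$, which yields the first relation with $g_2(s,n)=4s^2-n$ and no right-hand side. For the remaining weights it suffices, by the Sturm bound, to compare the first few odd-index Fourier coefficients of $f_k$ with those of $\theta_{\mathcal{Y}_{2k}}$: the leading coefficient fixes the constant $-\tfrac1{12}$, $-\tfrac13$, or $-\tfrac1{70}$, and agreement through the determining range forces equality of the two forms, hence of all odd coefficients.

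I expect the only delicate point to be bookkeeping rather than theory: verifying harmonicity of the $\mathcal{Y}_d$, computing the low-order coefficients of both sides (which requires the values $H(n-s^2)$ together with the $\mathcal{Y}_d$-weighted representation numbers of $n$ as a sum of four squares), and recording the dimensions of $S_{2k+2}(\Gamma_0(4))$. Given \Cref{main}, no further input on the mock-modular side is needed, so the corollary reduces to a finite verification carried out separately in each weight.
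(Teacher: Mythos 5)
Your argument is correct and matches the paper's intent: the corollary is stated there with no written proof beyond the phrase ``comparing the first few Fourier coefficients,'' and your fleshing-out via theta series with harmonic polynomial coefficients on $\Gamma_0(4)$, the dimensions $\dim S_{2k+2}(\Gamma_0(4))=k-1$, and a Sturm-bound comparison is exactly the intended finite verification. You also rightly read $g_\ell(s,n)$ as the $X^\ell$-coefficient of $\left(1-2sX+nX^2\right)^{-1}$ (the statement's $\left(1-sX+nX^2\right)^{-1}$ is a typo), consistent with \eqref{eqCohen} and with the displayed formulas for $g_4,g_6,g_8$.
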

\begin{remark}
The formula \eqref{eqCohen} looks indeed very similar to the Eichler-Selberg trace formula as given in \cite{Coh75}, so one might ask whether our result gives a similar trace formula for Hecke operators on the subspace $\mathcal{S}^{odd}_k(\Gamma_0(4))$ of cusp forms of weight $k$ on $\Gamma_0(4)$ with only odd $q$-powers in their Fourier expansion (this space is of course Hecke invariant). Unfortunately, computer experiments showed that this is in fact \emph{not} the case: As soon as $\dim\mathcal{S}^{odd}_k(\Gamma_0(4))\geq 2$, i.e. $k\geq 10$, the cusp form we get is \emph{not} a multiple of the generating function of traces of Hecke operators.
\end{remark}
\section*{Acknowledgements}
The author would like to thank Prof. Dr. Kathrin Bringmann for suggesting this topic as part of his PhD thesis \cite{Mert14}. He also thanks his colleagues at the Universt\"at zu K\"oln, especially Dr. Ben Kane, Maryna Viazovska, and Ren\'e Olivetto, for many fruit- and helpful discussions.
\providecommand{\bysame}{\leavevmode\hbox to3em{\hrulefill}\thinspace}
\providecommand{\MR}{\relax\ifhmode\unskip\space\fi MR }
\providecommand{\MRhref}[2]{%
  \href{http://www.ams.org/mathscinet-getitem?mr=#1}{#2}
}
\providecommand{\href}[2]{#2}

\end{document}